\renewcommand{\leq}{\leqslant}
\renewcommand{\geq}{\geqslant}
\newcommand{\refpar}{Sect.~}
\newcommand{\lda}{\lambda}
\newcommand{\Wo}{{\raisebox{0.2ex}{\(\stackrel{\circ}{W}\)}}{}}
\DeclareSymbolFont{myletters}{OML}{ztmcm}{m}{it}
\DeclareMathSymbol{\uplambda}{\mathord}{myletters}{"15}
\newtheorem{theorem}{Theorem}
\newtheorem{lemma}{Lemma}
\newtheorem{proposition}{Proposition}
\newtheorem{conjecture}{Conjecture}
\theoremstyle{definition}
\newtheorem{definition}{Definition}
\newtheorem{remark}{Remark}
\newenvironment{enbibliography}{\vspace{-0.5cm}}
\begin{document} 
\title{On the spectrum of the Sturm--Liouville 
problem with arithmetically self-similar weight}
\author{N.~V.~Rastegaev \\
\small{St. Petersburg State University} \\ 
\small{7/9 Universitetskaya nab., St. Petersburg, 199034 Russia} 
\\ \small{rastmusician@gmail.com}}
\renewcommand{\today}{}
\maketitle
\abstract{
Spectral asymptotics of the Sturm--Liouville problem with an arithmetically 
self-similar singular weight is considered. In previous papers 
A. A. Vladimirov and I. A. Sheipak, as well as the author, rely on 
the spectral periodicity property, which places major constraints on the self-similarity parameters of the weight. In this study, a different approach to eigenvalue counting function estimation is presented. As a result, a significantly wider class of self-similar measures can be considered. The obtained asymptotics is applied to the small ball deviations problem for the Green Gaussian processes.
}

\section{Introduction}
We generalize the results of 
\cite{VSh3}, \cite{Rast1} on the spectral asymptotics of the problem
\begin{gather}
\left\{
\begin{split}\label{eq:1.1}
    &-y'' = \lambda\mu y,\\
    &y'(0) = y'(1) = 0,
\end{split}
\right.
\end{gather}
where the weight measure $\mu$ is a distributional derivative of a self-similar generalized Cantor type function
(in particular, $\mu$ is singular with respect to the Lebesgue measure). 
\begin{remark}\label{remark1}
It is well known that the change of the boundary conditions causes a rank two perturbation of the quadratic form corresponding to the problem \eqref{eq:1.1}. It follows from the general variational theory 
(see \cite[\S10.3]{BS2}) that counting functions of the eigenvalues of boundary-value problems related to the same equation but different boundary conditions cannot differ by more than 2. Thus, the main term of spectral asymptotics does not depend on the boundary conditions.

Also, it follows from \cite[Theorem 3.2]{Mats} (see also \cite[Lemma 5.1]{NazSheip} for a simple variational proof), that relatively compact perturbations of the operator (e.g. lower order terms) do not affect the main term of the asymptotics given by \eqref{eq:mes_asymp} below.
\end{remark}
\begin{remark}
Spectral asymptotics of the problem \eqref{eq:1.1}, aside from being interesting in itself, arises in the problem of small ball deviations of Green Gaussian processes in $L_2(\mu)$ (see \cite{Naz}).
\end{remark}

The problem of the eigenvalues asymptotic behavior for the problem \eqref{eq:1.1} goes back to the works of
M.~G.~Krein (see, e.g., \cite{K}).

From \cite{BS1} it follows that if the measure $\mu$ contains absolutely
continuous component, its singular component does not influence
the main term of the spectral asymptotic.

In the case of singular measure $\mu$ it follows from early works by M.~G.~Krein, that 
the counting function $N:(0,+\infty)\to\mathbb N$ of eigenvalues of the problem \eqref{eq:1.1} 
admits the estimate $o(\lda^{\frac{1}{2}})$ instead of the usual asymptotics $N(\lda)\sim C\lda^{\frac{1}{2}}$ in the case of measure containing a regular component. (see, e.g., \cite{KrKac} or \cite{McKeanRay}, and also \cite{B} for similar results for higher even order operators and better lower bounds for eigenvalues for some special classes of measures).

Exact power exponent $D$ of the counting function $N(\lda)$ in
the case of self-similar measure $\mu$ was established in \cite{F} (see also earlier works \cite{HU} and \cite{McKeanRay} for particular results,
concerning the classical Cantor ladder, and \cite{RFr} for a generalization to the case of self-conformal, i.e. self-similar via non-affine contractions, measures).

It is shown in \cite{SV} and \cite{KL} that the eigenvalues counting function of problem \eqref{eq:1.1} has the asymptotics
\begin{equation}\label{eq:mes_asymp}
	N(\lambda)=\lambda^D\cdot\bigl(s(\ln\lambda)+o(1)\bigr),
	\qquad \lambda\to+\infty,
\end{equation}
where $D\in(0,\frac{1}{2})$ and $s$ is a continuous function.
In the case of non-arithmetic type of self-similarity (see Definition \ref{arithm} below) of the 
primitive for $\mu$, the function $s$ degenerates into constant. In the case of arithmetic self-similarity, it has a period $T$, which depends on the parameters of the self-similarity (see also \cite{VSh1} for similar results in the case of indefinite weight).

In the paper \cite{Naz} this result is generalized to the case of an arbitrary even order differential operator. Also the following conjecture is introduced.
\begin{conjecture}\label{Conj1} 
Function $s$ is not constant for arbitrary non-constant weight with arithmetically self-similar primitive.
\end{conjecture}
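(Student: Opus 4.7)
The plan is to argue by contradiction, exploiting the renewal structure that already governs the leading asymptotics \eqref{eq:mes_asymp}. Suppose the self-similar primitive of $\mu$ is built from $n$ affine copies of itself, with contraction ratios $a_1,\dots,a_n$ on the interval and mass shares $b_1,\dots,b_n$; the arithmetic hypothesis means $\ln(a_ib_i)\in T\mathbb{Z}$ for all $i$. Rescaling the $i$-th sub-interval onto $[0,1]$ reduces the eigenvalue problem on that piece to a problem of type \eqref{eq:1.1} with spectral parameter $a_ib_i\lda$ and possibly altered (Dirichlet or mixed) boundary conditions. Since any change of boundary conditions shifts $N$ by at most $2$, this yields a renewal-type recursion
$$
N(\lda) \;=\; \sum_{i=1}^{n} N(a_ib_i\lda) \;+\; R(\lda),
$$
where $R$ is a bounded integer-valued correction encoding the gluing and boundary data. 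The Moran-type characteristic equation $1=\sum_i (a_ib_i)^{z}$ then has its zeros on the critical line arranged exactly along the arithmetic progression $z_k := D+2\pi i k/T$, $k\in\mathbb{Z}$.

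Next, I would pass to the Mellin side. Setting $\lda=e^t$ and $F(t)=e^{-Dt}N(e^t)$, the recursion becomes a linear functional equation whose bounded solutions split into a $T$-periodic homogeneous part (responsible for the profile $s$) plus a particular solution produced by $R$. A standard computation identifies the Fourier coefficients of $s$ with residues of the Mellin transform $\mathcal{M}N$ at the points $z_k$, giving a formula essentially of the form
$$
\widehat s(k) \;=\; -\,\frac{\mathcal{M}R(z_k)}{T\,\sum_i (a_ib_i)^{D}\ln(a_ib_i)}, \qquad k\neq 0.
$$
Conjecture~\ref{Conj1} is therefore equivalent to the assertion that $\mathcal{M}R(z_k)\neq 0$ for at least one $k\neq 0$.

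The key observation is that $R$ is far from arbitrary: it is a finite linear combination of shifted Neumann/Dirichlet counting functions of the scaled one-interval auxiliary problems together with a few contributions supported on the ``gap'' subintervals on which $\mu$ is trivial. Consequently $\mathcal{M}R$ is an explicit finite sum of generalized Dirichlet series, meromorphic and of controlled polynomial growth in vertical strips. If $\mathcal{M}R$ vanished on the whole infinite arithmetic progression $\{z_k\}_{k\neq 0}$ on the critical line, a Carlson / Phragm\'en--Lindel\"of uniqueness argument should force $\mathcal{M}R\equiv 0$, and hence $R\equiv 0$ up to a null set. A case analysis of when the recursion has purely constant periodic profile would then have to show that this happens only when the underlying weight degenerates, contradicting the non-constancy hypothesis.

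The hard part, and the reason the conjecture remains open, is precisely this last reduction. Several continuous self-similarity parameters $(a_i,b_i)$ feed into $R$, and one must rule out that a non-trivial algebraic coincidence among them makes every gap-interval contribution cancel at every $z_k$. This rigidity question is not covered by the standard variational tools (which at best give the leading constant in \eqref{eq:mes_asymp}), and I expect the new estimates of the counting function developed in this paper to be exactly the quantitative ingredient required to control the vertical growth of each Dirichlet component and thereby close the Carlson-type argument.
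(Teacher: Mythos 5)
Your proposal is a program, not a proof, and you say so yourself: the entire content of the conjecture is pushed into the final step (non-vanishing of $\mathcal{M}R$ at some $z_k\neq 0$), which you concede you cannot establish. Beyond that admitted gap, the one tool you offer to close it is itself flawed: a Carlson/Phragm\'en--Lindel\"of uniqueness argument cannot force $\mathcal{M}R\equiv 0$ from vanishing on the arithmetic progression $\{D+2\pi ik/T\}$, because Dirichlet polynomials such as $\sum_i(a_ib_i)^z-1$ are bounded on vertical lines, of exponential type exactly matching the spacing of the progression, and vanish precisely on such a set; so $\mathcal{M}R$, being itself a finite combination of generalized Dirichlet series, lives exactly in the class where this uniqueness fails. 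There is also an unaddressed analytic-continuation issue: $R$ is bounded and vanishes near $\lambda=0$, so its Mellin transform converges only for $\Re z<0$ and must be continued across $\Re z=D>0$ before residues can even be extracted. Finally, note that the renewal recursion with an $O(1)$ remainder is too coarse on its own: rank-two boundary perturbations shift each counting function by at most $2$, but the profile $s$ is determined by the \emph{fine structure} of $R$, not merely its boundedness.

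The paper takes an entirely different and more robust route, and it is worth internalizing why. Rather than computing Fourier coefficients of $s$, it proves the much stronger structural statement \eqref{Th1eqn}: $\sigma(t)=e^{Dt}s(t)$ is a purely singular non-decreasing function, whence $s$ cannot be constant (a constant $s$ would make $\sigma(t)=ce^{Dt}$ absolutely continuous). The singularity of $\sigma$ is obtained by exhibiting it as a limit of non-decreasing step functions $f_j$ built from weighted sums of $N(\tau^{-i-j}e^t)$, with the number of jumps of $f_j$ of order $\tau^{-jD}$ and $\|\sigma-f_j\|_{L_2}=o(\tau^{jD})$, so that Proposition~\ref{sing} applies. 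The two quantitative inputs are the interlacing Theorem~\ref{4.1}, which pins $N(\lambda,J)-N(\lambda,J_1)-N(\lambda,J_2)$ to the two values $\{-1,0\}$ (monotonicity of $f_j$ in $j$ and the uniform bound $|f_{j+1}-f_j|\le C\tau^{jD}$), and Theorem~\ref{Th5.1}, a trace-class-type estimate $\sum_n|\ln\lambda_n(J)-\ln\mu_n(J)|<\infty$ derived from Proposition~\ref{5.2}, which forces $f_{j+1}=f_j$ outside a set of measure $o(1)$. Note also that the paper proves the conjecture only for ladders with non-empty intermediate intervals (Theorem~\ref{main_theo}); the conjecture as literally stated remains open, so your target was in any case broader than what is currently known.
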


The paper \cite{VSh1} gives computer-assisted proof of this conjecture in the simplest case, when the generalized primitive of weight $\mu$ is a classical Cantor ladder.

In \cite{VSh3} Conjecture \ref{Conj1} was confirmed for ``even'' ladders (see Definition \ref{even} below). For such ladders the following theorem was proved.
\begin{theorem}\label{Th1}
The coefficient $s$ from the asymptotic \eqref{eq:mes_asymp} satisfies the relation
\begin{equation}\label{Th1eqn}
	\forall t\in [0,T]\quad s(t)=e^{-Dt}\,\sigma(t),
\end{equation}
where $\sigma$ is some purely singular non-decreasing function, that is, the primitive of a measure singular with respect to the Lebesgue measure.
\end{theorem}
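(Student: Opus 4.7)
The plan is to define $\sigma(t) := e^{Dt}s(t)$ on $[0,T]$ directly from the asymptotic \eqref{eq:mes_asymp} and then verify the two claimed properties --- monotonicity and pure singularity --- using the self-similar structure of $\mu$ encoded in the ``even ladder'' assumption. Rewriting \eqref{eq:mes_asymp} with $t = \ln\lambda$ gives $N(e^t) = \sigma(t) + o(e^{Dt})$, so $\sigma$ is the natural candidate for the main term of the counting function on the logarithmic scale within one period, and continuity of $\sigma$ is immediate from continuity of $s$.

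The first step is to exploit the spectral periodicity of even ladders to obtain a renewal-type identity of the form $N(\alpha\lambda) = n\,N(\lambda) + O(1)$ with $\alpha > 1$, $n \in \mathbb{N}$, $\alpha^D = n$ and $T = \ln\alpha$. The evenness of the ladder is used here in an essential way: the Neumann-type matching at the common endpoints of the self-similar pieces allows a clean decomposition of the eigenvalue problem into $n$ rescaled copies of itself plus a uniformly bounded boundary error. Monotonicity of $\sigma$ then follows by iteration: for $0 \le t_1 < t_2 \le T$, applying this identity $k$ times gives $N(\alpha^k e^{t_j}) = n^k N(e^{t_j}) + O(n^k)$, and passing to the limit in $N(\alpha^k e^{t_1}) \le N(\alpha^k e^{t_2})$ after division by $n^k = \alpha^{kD}$ yields $\sigma(t_1) \le \sigma(t_2)$.

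For pure singularity of $\sigma$, the idea is that the iterated renewal equation organises the eigenvalue distribution into a Cantor-like hierarchy. At each depth, the $n$ rescaled copies of the base problem are confined to sub-intervals of $[0,T]$ that are separated by spectral gaps reflecting the gaps in the support of $\mu$. The total increment of $\sigma$ across any such gap is controlled by the uniform $O(1)$ remainder and becomes negligible after rescaling by $n^{-k}$, while the Lebesgue measure occupied by the ``active'' sub-intervals shrinks geometrically with depth. Passing to the limit should concentrate the growth of $\sigma$ on the intersection of these active sets, which is a Lebesgue-null self-similar Cantor set, and one reads off that the absolutely continuous part of $\sigma$ vanishes.

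I expect the main obstacle to be exactly this last step: propagating the bounded remainder through infinitely many iterations \emph{sharply} enough to conclude that the absolutely continuous part of $\sigma$ truly vanishes, rather than merely being of lower order. This is where the evenness assumption should do the decisive work, since in that case the boundary contributions from neighbouring pieces combine into a remainder with enough structure to be absorbed into the scaling identity, yielding a clean enough recursion to support the Cantor-like analysis.
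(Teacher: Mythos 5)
Your setup (defining $\sigma(t)=e^{Dt}s(t)=\lim_{k\to\infty}\tau^{kD}N(\tau^{-k}e^t)$) and the monotonicity step are fine --- indeed monotonicity is immediate, since $\sigma$ is a pointwise limit of the non-decreasing functions $t\mapsto\tau^{kD}N(\tau^{-k}e^t)$, and the renewal identity is not needed for that. The genuine gap is in the singularity argument, and it is not quite the obstacle you anticipate. Your plan is to show that $d\sigma$ lives on a Lebesgue-null Cantor-like intersection of ``active sub-intervals''. There are two problems. First, that picture is neither proved nor needed: nothing forces the support of $d\sigma$ to be a null set (a purely singular monotone function can be strictly increasing), and the claim that ``the Lebesgue measure occupied by the active sub-intervals shrinks geometrically with depth'' is asserted, not derived. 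Second, and decisively, the remainder in $N(\alpha\lambda)=nN(\lambda)+O(1)$ controls only the \emph{number} of eigenvalues in each window; it says nothing about their \emph{positions} on the logarithmic scale, which is what determines where $\sigma$ increases. A count correct to within $O(1)$ is compatible with the level-$(k+1)$ eigenvalues being spread essentially anywhere between consecutive level-$k$ ones, in which case no shrinking system of active intervals exists. No amount of iterating the counting identity converts it into a location estimate.

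The proof in the paper supplies exactly the two ingredients your sketch is missing. (i) A quantitative singularity criterion (Proposition \ref{sing}): a monotone $f$ is purely singular provided it is approximated in $L_2$ by step functions $f_j$ with $\#\mathfrak A_j\cdot\|f-f_j\|_{L_2}=o(1)$; since $\#\mathfrak A_j\asymp\tau^{-jD}$, one needs $\|\sigma-f_j\|_{L_2}=o(\tau^{jD})$, strictly better than the $O(\tau^{jD})$ that follows from the uniform bound $|f_{j+1}-f_j|\le C\tau^{jD}$ of Theorem \ref{4.1}. (ii) The extra smallness comes from a positional statement about eigenvalues: the interlacing defects are summable, $\sum_n|\ln\lambda_n(J)-\ln\mu_n(J)|<+\infty$ (Theorem \ref{Th5.1}, resting on Proposition \ref{5.2}, i.e.\ on the fact that passing from Neumann to Robin boundary conditions shifts $\ln\lambda_n$ by a summable sequence). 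This is what makes $\operatorname{mes}\{t: f_{j+1}(t)\neq f_j(t)\}=o(1)$ and hence upgrades $O(\tau^{jD})$ to $o(\tau^{jD})$. In the even case this role is played by the same perturbation-theoretic input from \cite{VSh3}, not by any cancellation of ``boundary contributions from neighbouring pieces''. Without an ingredient of this type your argument cannot exclude an absolutely continuous component of $\sigma$.
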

Hence the relation $s(t)\neq const$ follows immediately.
This result is generalized in \cite{V3} to the case of 
the fourth order operators.

In paper \cite{Rast1} the result of \cite{VSh3} is generalized to the wider class of ladders satisfying conditions \eqref{wider_case}.

The aim of this paper is to generalize Theorem \ref{Th1} to the case of an arbitrary arithmetically self-similar ladder with non-empty intermediate intervals.

This paper has the following structure. \refpar{2} provides the necessary definitions of self-similar functions of generalized Cantor type, derives their properties, defines the classes of functions under consideration and states the main result. \refpar{3}
establishes the auxiliary facts, concerning the spectral properties. In \refpar{4} Theorem \ref{Th1} is proved for the suggested class of ladders. Finally, in \refpar{5} we provide the link between the results of this paper and the problem of small ball deviations of Gaussian processes.

\section{Self-similar functions. Main result statement}\label{par:2}
\noindent
Let $m\geq 2$, and let
$\{I_k = [a_k,b_k]\}_{k=1}^m$ be sub-segments of $[0,1]$, without interior intersection, i.e. $b_j \leqslant a_{j+1}$ for all $j=1\ldots m-1$. Next, let positive values $\{\rho_i\}_{i=1}^m$ satisfy the relation $\sum\limits_{k=1}^m\rho_k = 1$ and let $\{e_i\}_{i=1}^m$ be boolean values. 

We define a family of affine mappings
\begin{gather*}
S_i(t) = 
\left\{
\begin{split}
&a_i + (b_i-a_i)\,t, \quad e_i = 0, \\
&b_i - (b_i-a_i)\,t, \quad e_i = 1,
\end{split}
\right.
\end{gather*}
contracting $[0,1]$ onto
$I_i$ and changing the orientation when $e_i = 1$.

We define the operator $\mathcal{S}$ on the space $L_{\infty}(0,1)$ as follows:
\begin{equation*}
\mathcal{S}(f) = \sum\limits_{i=1}^m
\left((e_i + (-1)^{e_i}f\circ S_i^{-1})\cdot \chi_{I_i}+\chi_{\{x>b_i\}} \right)\rho_i,
\end{equation*}
where $\chi$ stands for the indicator function of a set.
Thus, the graph of the function $\mathcal{S}(f)$ on each segment $I_i$ is an appropriately shrunk graph of the function $f$. On any intermediate interval $\mathcal{S}(f)$ is constant.

\begin{proposition}
\textbf{\textsc{(see, e.g. \cite[Lemma 2.1]{Sh})}}
$\mathcal{S}$ is a contraction mapping on $L_{\infty}(0,1)$.
\end{proposition}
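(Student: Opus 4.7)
The plan is to show the stronger statement that $\mathcal{S}$ is Lipschitz with constant $q := \max_{1 \le i \le m} \rho_i$, and then observe that $q < 1$ because the $\rho_i$ are positive and sum to $1$ with $m \ge 2$.

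First I would compute the difference $\mathcal{S}(f) - \mathcal{S}(g)$ for $f, g \in L_\infty(0,1)$ directly from the definition. The crucial point is that the only terms in the summand that depend on the argument $f$ (resp.\ $g$) are the compositions $(-1)^{e_i} f \circ S_i^{-1}$ restricted to $I_i$; the additive pieces $e_i \chi_{I_i}$ and $\chi_{\{x>b_i\}}$ are independent of $f$ and cancel in the subtraction. This yields the pointwise identity
\begin{equation*}
\mathcal{S}(f)(x) - \mathcal{S}(g)(x) = \sum_{i=1}^m (-1)^{e_i}\,\rho_i\,\bigl(f - g\bigr)\!\bigl(S_i^{-1}(x)\bigr)\,\chi_{I_i}(x)
\end{equation*}
for almost every $x \in [0,1]$.

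Next I would use the assumption that the segments $\{I_i\}_{i=1}^m$ have no interior intersection, so at every $x \in [0,1]$ at most one indicator $\chi_{I_i}(x)$ is nonzero (the endpoints form a null set, irrelevant for the $L_\infty$-norm). Hence, for a.e.\ $x \in [0,1]$,
\begin{equation*}
\bigl|\mathcal{S}(f)(x) - \mathcal{S}(g)(x)\bigr| \le \rho_{i(x)}\,\bigl|(f-g)(S_{i(x)}^{-1}(x))\bigr| \le \Bigl(\max_{1 \le i \le m}\rho_i\Bigr)\,\|f-g\|_{L_\infty(0,1)},
\end{equation*}
since $S_{i(x)}^{-1}$ maps $I_{i(x)}$ into $[0,1]$.

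Finally, since $\rho_i > 0$ for every $i$, $\sum_{i=1}^m \rho_i = 1$ and $m \ge 2$, each individual $\rho_i$ is strictly less than $1$, and therefore $q = \max_i \rho_i < 1$. Taking the essential supremum over $x$ in the previous inequality yields $\|\mathcal{S}(f) - \mathcal{S}(g)\|_{L_\infty} \le q\,\|f-g\|_{L_\infty}$, which is the desired contraction property. There is no real obstacle here; the only point to watch is not confusing the constant pieces with the $f$-dependent pieces and remembering that the disjointness of the $I_i$ is what prevents the Lipschitz constant from accumulating over~$i$.
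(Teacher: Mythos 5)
Your proof is correct and is essentially the standard argument for this statement (the paper itself only cites \cite[Lemma 2.1]{Sh} without reproducing a proof): the constant terms cancel, the disjointness of the interiors of the $I_i$ gives the pointwise bound with constant $\max_i\rho_i$, and $\max_i\rho_i<1$ since the $\rho_i$ are positive and sum to $1$ with $m\ge 2$. The only implicit point, which is harmless, is that $S_i^{-1}$ is affine and hence preserves null sets, so the essential supremum transfers under composition.
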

Hence, by the Banach fixed-point theorem there exists a (unique) function 
$\mathcal{C}\in L_{\infty}(0,1)$ such that $\mathcal{S}(\mathcal{C})=\mathcal{C}$.
Such a function $\mathcal{C}(t)$ will be called the \textit{generalized Cantor ladder} with $m$ steps.

The function $\mathcal{C}(t)$ can be found as the uniform limit of the sequence $\mathcal{S}^k(f)$ for \mbox{$f(t)\equiv t$}, which allows us to assume that it is continuous. It is also easy to demonstrate that it is monotone, $\mathcal{C}(0)=0$, $\mathcal{C}(1)=1$.
The derivative of  $\mathcal{C}(t)$ in the sense of distributions is a singular measure $\mu$ without atoms, self-similar in the sense of Hutchinson (see \cite{H}), i.e. it satisfies the relation
\begin{equation*}
\mu(E) = \sum\limits_{k=1}^m \rho_k \cdot \mu(S_k^{-1}(E\cap I_k))
\end{equation*}
for arbitrary measurable set $E$.
More general constructions of self-similar functions are described in
\cite{Sh}.

\begin{remark}
Without loss of generality we could assume that $a_1 = 0$, $b_m = 1$, otherwise the measure could be stretched, which leads to the spectrum being multiplied by a constant.
\end{remark}

\begin{definition}\label{arithm}
The self-similarity is called \textit{arithmetic} if
the logarithms of the values $\rho_k(b_k-a_k)$ are commensurable.
In other words,
\begin{equation*}
\rho_i(b_i-a_i) = \tau^{k_i}, \quad i=1,\ldots, m,
\end{equation*}
for a certain constant $\tau$ and $k_i \in \mathbb{N}$, such that $\text{GCD}(k_i, i=1,\ldots, m) = 1$.
\end{definition}

\noindent We call the generalized Cantor ladder \textit{even}, if
\begin{equation}\label{even}
 \forall i=2,\dots, m\quad \rho_i=\rho_1=\frac{1}{m},
\quad b_i - a_i=b_1 - a_1,\quad a_i - b_{i-1}=a_2 - b_1 > 0.
\end{equation}
That is the class of ladders considered in \cite{VSh3}.

In \cite{Rast1} the relation \eqref{Th1eqn} is proved for arithmetically self-similar ladders with the following conditions:
\begin{equation}\label{wider_case}
\forall i = 2,\dots, m \quad k_i = k_1 = 1, \;\;
a_{i}-b_{i-1} > 0.
\end{equation}

Let's state the main result of this paper.
\begin{theorem}\label{main_theo}
Let the ladder be arithmetically self-similar, and let $a_{i}-b_{i-1} > 0$ for all $i=2,\ldots, m$.
Then the formula \eqref{Th1eqn} holds.
\end{theorem}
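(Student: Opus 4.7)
The plan is to extend the variational-renewal approach of \cite{VSh3, Rast1} by replacing their spectral-periodicity argument---which exploited the case of equal scaling exponents $k_i$---with a finer renewal analysis that tolerates arbitrary $k_i$.

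The first step is a variational reduction. Since by hypothesis the intermediate intervals $(b_{i-1}, a_i)$ are non-empty and $\mu$ vanishes on each of them, imposing Dirichlet (respectively Neumann) boundary conditions at every endpoint $a_i$, $b_i$ decouples problem \eqref{eq:1.1} into $m$ sub-problems on the $I_i$. Under the affine change of variable induced by $S_i$, each such sub-problem is unitarily equivalent to \eqref{eq:1.1} with spectral parameter $\lambda$ replaced by $\lambda\rho_i(b_i-a_i)=\lambda\tau^{k_i}$. Because the Dirichlet and Neumann variants differ by a finite-rank boundary perturbation, variational bracketing yields the renewal inequality
\begin{equation*}
\Bigl|\, N(\lambda) - \sum_{i=1}^m N\bigl(\lambda\tau^{k_i}\bigr)\, \Bigr| \leq C,
\end{equation*}
with $C$ independent of $\lambda$. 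Combined with \eqref{eq:mes_asymp} and the defining identity $\sum_i \tau^{Dk_i}=1$, this forces $s$ to have period $T=-\ln\tau$ (minimal, since $\mathrm{GCD}(k_i)=1$). Setting $\sigma(t):=e^{Dt}s(t)$ and noting from \eqref{eq:mes_asymp} that $\sigma(t)=\lim_{n\to\infty}e^{-nDT}N(e^{t+nT})$, the monotonicity of $N$ transfers immediately to monotonicity (and continuity) of $\sigma$ on $[0,T]$.

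The main work---and the principal difficulty---lies in showing that $\sigma'=0$ almost everywhere on $[0,T]$. The plan is to iterate the renewal inequality to obtain, for each $n$,
\begin{equation*}
N(\lambda) = \sum_{I\in\{1,\ldots,m\}^n} N\bigl(\lambda\tau^{K(I)}\bigr) + O(m^n),
\qquad K(I):=k_{i_1}+\cdots+k_{i_n},
\end{equation*}
and then to analyze how these summands distribute along one period. Grouping multi-indices by the common value $K(I)$ (which, thanks to $\mathrm{GCD}(k_i)=1$, ranges over all sufficiently large integers), one shows that each group contributes jumps of $N$ concentrated on a geometric progression with ratio $\tau$, separated by long intervals on which $N$ is constant. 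The union of these constancy intervals, pulled back to $t\in[0,T]$, should have total Lebesgue measure tending to $T$ as $n\to\infty$; passing to the limit then yields $\sigma'=0$ a.e.

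The principal obstacle is the combinatorial bookkeeping forced by unequal $k_i$: the $m^n$ terms in the iterated renewal produce $O(n)$ distinct values of $K(I)$ with multiplicities that one must estimate, and the error term $O(m^n)$ must be dominated by the main term $\lambda^D s(\ln\lambda)$, forcing $n$ to grow only like $o(\ln\lambda)$. Navigating this trade-off, and quantifying the flat-intervals argument uniformly on $[0,T]$, is exactly where the equality $k_i=k_1=1$ was used essentially in \cite{VSh3, Rast1}; here it must be replaced by the new estimate of the counting function advertised in the abstract.
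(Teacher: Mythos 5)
Your setup is on the right track: the renewal inequality $\bigl|N(\lambda)-\sum_{i}N(\tau^{k_i}\lambda)\bigr|\leqslant m-1$ is exactly what Lemma~\ref{lemma1} combined with (a telescoped) Theorem~\ref{4.1} delivers, and reducing the theorem to the pure singularity of $\sigma(t)=e^{Dt}s(t)$ is the correct reduction. But the heart of the proof is missing. Iterating the renewal inequality with an $O(m^n)$ error and balancing it against the main term $\lambda^D$ only reproduces the already known asymptotics \eqref{eq:mes_asymp}; it is too crude to detect singularity. The mechanism that actually yields \eqref{Th1eqn} is a step-function approximation criterion (Proposition~\ref{sing}): one needs non-decreasing step functions $f_j\to\sigma$ with $\#\mathfrak A_j\cdot\|\sigma-f_j\|_{L_2}=o(1)$. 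Since necessarily $\#\mathfrak A_j\asymp\tau^{-jD}$, this forces $\|\sigma-f_j\|_{L_2}=o(\tau^{jD})$, i.e.\ strictly better than the uniform bound $|f_{j+1}-f_j|=O(\tau^{jD})$ that the bounded renewal defect gives for free. The gain has to come from a \emph{measure} estimate: the defect $N(\lambda,J)-N(\lambda,J_1)-N(\lambda,J_2)$ equals $-1$ precisely on $\bigcup_n(\mu_n(J),\lambda_n(J)]$, and one must show that this set has finite measure in the logarithmic variable, i.e.\ $\sum_n|\ln\lambda_n(J)-\ln\mu_n(J)|<+\infty$ (Theorem~\ref{Th5.1}). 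That summability is the genuinely new estimate of the paper. It is proved by exploiting the linearity of the eigenfunction on the gap $[d_1,c_2]$ to bound its logarithmic derivative at one of the gap endpoints by $2/(c_2-d_1)$, sandwiching $\lambda_n(J)$ between a Neumann eigenvalue and a fixed-Robin eigenvalue on $J_1$ or $J_2$, and then invoking the known convergence $\sum_n|\ln\mu_n-\ln\lambda_n|<+\infty$ for Robin versus Neumann problems (Proposition~\ref{5.2}). This is also exactly where the hypothesis $a_i-b_{i-1}>0$ enters.

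By contrast, your concluding step --- that the jumps of $N$ ``concentrate on a geometric progression separated by long intervals on which $N$ is constant,'' whose pullback to $[0,T]$ has measure tending to $T$ --- is unsubstantiated and, as stated, not a workable strategy: $N$ has $\asymp\lambda^D$ jumps per multiplicative period, so it is not constant on intervals of fixed logarithmic length anywhere near infinity. The small set is not where $N$ jumps but where the renewal identity fails to be exact, and without Theorem~\ref{Th5.1} (or an equivalent) you have no control on its size; the combinatorial bookkeeping over multi-indices $I$ that you flag as the obstacle never has to be carried out in the paper's argument, which instead uses a fixed finite window of scales $i=0,\dots,\kappa_p-1$ with explicit coefficients $C_i$ determined by \eqref{period}.
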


\begin{remark}
For the described class of ladders the power exponent $D$ and the period of the function $s(t)$ are defined by the following relations, obtained in \cite{SV}:
\begin{equation}\label{period}
\sum\limits_{i=1}^m \tau^{k_iD} = 1, \quad T = -\ln\tau.
\end{equation}
\end{remark}

\section{Auxiliary facts about the spectrum}\label{par:3}
We consider the formal boundary value problem on a segment $[a,b] \subset [0,1]$:
\begin{gather}
\left\{
\begin{split}\label{eq:3.1}
    &-y''=\lambda\mu y,\\ 
    &y'(a)-\gamma_0 y(a)=y'(b)+\gamma_1 y(b)=0.
\end{split}
\right.
\end{gather}
We call the function $y \in W_2^1[a,b]$ its generalized solution if it satisfies the integral identity
$$ \int\limits_a^b y'\eta'\, dx +
\gamma_0y(a)\eta(a)+\gamma_1y(b)\eta(b) = 
\lda\int_a^b y\eta \;\mu(dx)  $$
for any $\eta \in W_2^1[a,b]$. Substituting functions \mbox{$\eta \in \Wo_2^1[a,b]$} into the integral identity, we establish that the derivative $y'$ is a primitive of a singular signed measure without atoms $-\lda\mu y$, whence $y$ is continuously differentiable. 

Hereinafter, a particular case of \cite[Proposition~11]{V2} is required:

\begin{proposition}\label{oscil} 
Let $\{\lambda_n\}_{n=0}^{\infty}$ be a sequence
of eigenvalues of the boundary value
problem \eqref{eq:3.1} numbered in ascending order. Then, regardless of the choice of index $n\in\mathbb N$, eigenvalue $\lambda_n$ is
simple, and the corresponding eigenfunction does not vanish on the boundary of the segment \([a,b]\) and has exactly $n$ different zeros within this segment.
\end{proposition}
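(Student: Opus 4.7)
The plan is to prove Proposition~\ref{oscil} by combining a Wronskian identity (for simplicity and non-vanishing at the endpoints) with a Prüfer-angle argument (for counting zeros). Because $\mu$ is singular, pointwise ODE manipulations must be replaced throughout by integrated identities.

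First I would establish uniqueness for the Cauchy problem. Rewriting $-y''=\lambda\mu y$ in the integrated form
\[
y(x) = y(a) + y'(a)(x-a) - \lambda\int_a^x (x-t)\,y(t)\,\mu(dt),
\]
which is meaningful because $\mu$ is a finite atomless measure and $y\in C^1$, a standard contraction argument on a short sub-interval (and patching) yields unique global solvability. In particular, $y(a)=y'(a)=0$ forces $y\equiv 0$. Simplicity and non-vanishing at the endpoints now follow from the Wronskian. Let $y_1, y_2$ be two eigenfunctions for the same $\lambda$ and form $W = y_1 y_2' - y_1' y_2 \in C[a,b]$. Since $dy_i' = -\lambda y_i\,d\mu$ as signed measures, a direct Leibniz-type computation gives $dW = 0$, so $W$ is constant; the left boundary condition yields $W(a)=\gamma_0 y_1(a) y_2(a) - \gamma_0 y_1(a) y_2(a)=0$, hence $W\equiv 0$ and $y_1, y_2$ are proportional. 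If $y(a)=0$ for an eigenfunction, the boundary condition forces $y'(a)=0$, and Cauchy uniqueness gives $y\equiv 0$; symmetric reasoning handles $b$.

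For zero counting I would fix the nontrivial solution $y(\cdot;\lambda)$ satisfying the left boundary condition (normalized by $y(a;\lambda)=1$, $y'(a;\lambda)=\gamma_0$) and introduce the Prüfer angle $\theta(x;\lambda)$ via $y = r\sin\theta$, $y' = r\cos\theta$, taken as a continuous lift. Zeros of $y$ in $(a,b)$ correspond to $\theta \in \pi\mathbb{Z}$; by Cauchy uniqueness they are simple, and since $\mu$ is atomless the angle is strictly increasing at any such zero (near a zero $x_0$ the Volterra equation shows $y$ is locally linear in $x-x_0$ up to higher order, so $\theta$ behaves there as in $-y''=0$). Strict $\lambda$-monotonicity $\partial_\lambda\theta(x;\lambda)>0$ for $x>a$ follows from the identity
\[
(y_{\lambda_1} y_{\lambda_2}' - y_{\lambda_1}' y_{\lambda_2})(x) = (\lambda_1 - \lambda_2)\int_a^x y_{\lambda_1} y_{\lambda_2}\,d\mu,
\]
whose left-hand side vanishes at $a$ by the boundary condition and whose measure-derivative equals the integrand. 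Sending $\lambda_2\to\lambda_1=\lambda$ and comparing with the Prüfer relation $y\,\partial_\lambda y' - y'\,\partial_\lambda y = -r^2\,\partial_\lambda\theta$ gives $\partial_\lambda\theta(x;\lambda) = r^{-2}\int_a^x y_\lambda^2\,d\mu > 0$ for $x>a$.

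Finally, $\lambda$ is an eigenvalue precisely when $\theta(b;\lambda) \equiv \theta_b \pmod \pi$, where $\theta_b$ is the target angle determined by $\gamma_1$. Continuity and strict $\lambda$-monotonicity of $\theta(b;\cdot)$ produce an increasing sequence of simple eigenvalues $\lambda_0<\lambda_1<\dots$, and at $\lambda_n$ the lift $\theta(\cdot;\lambda_n)$ sweeps out exactly $n$ extra multiples of $\pi$ on $(a,b)$, giving exactly $n$ interior zeros of the corresponding eigenfunction. The main obstacle is making the Prüfer formalism rigorous under a singular weight: the angle $\theta$ does not satisfy a classical ODE on the support of $\mu$, so both the upward-only crossing property at zeros of $y$ and the strict $\lambda$-monotonicity must be extracted from integrated Wronskian identities rather than pointwise computations. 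Once these ingredients are in place, the counting of $\pi$-crossings yields the claim.
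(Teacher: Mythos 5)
The paper never proves this proposition: it is imported wholesale as a particular case of \cite[Proposition~11]{V2}, so there is no internal argument to compare against. Your reconstruction follows what is essentially the standard route of that oscillation theory for measure coefficients --- the Volterra integral form of the Cauchy problem for uniqueness, constancy of the Wronskian for simplicity and non-vanishing at the endpoints, and a Pr\"ufer angle whose differential $d\theta=\cos^2\theta\,dx+\lambda\sin^2\theta\,d\mu$ is driven by the measure --- and the outline is sound; the pointwise-to-integrated replacements you describe are exactly the right ones. Two places deserve more care than you give them. First, $\partial_\lambda\theta(x;\lambda)=r^{-2}\int_a^x y_\lambda^2\,d\mu$ is only non-negative, and actually vanishes for $x$ in an initial interval carrying no $\mu$-mass (which does occur for the Cantor-type measures of this paper when $[a,b]$ begins in a gap); what you need, and what is true, is strict positivity at $x=b$, which holds because a nontrivial solution cannot vanish on all of $\operatorname{supp}\mu\cap[a,b]$ (it would then be affine, and an atomless measure supported in the zero set of a nonzero affine function is zero). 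Second, your last sentence conceals the two remaining ingredients of the count: the base case (the eigenfunction of $\lambda_0$ has no interior zeros, which requires locating $\theta(b;\lambda)$ relative to the target angle at or below the bottom of the spectrum, not just monotonicity in $\lambda$), and the surjectivity $\theta(b;\lambda)\to+\infty$ as $\lambda\to+\infty$ (equivalently, infinitude of the spectrum), which needs $\operatorname{supp}\mu\cap[a,b]$ to be infinite. Both are standard and hold in the setting of problem \eqref{eq:3.1} with the measures considered here, but as written they are asserted rather than proved.
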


Let us denote by $\lda_n([a,b])$, $n\geqslant 0$, the eigenvalues of the problem
\begin{gather*}
\left\{
\begin{split}
&-y'' = \lda\mu y, \\
&y'(a) = y'(b) = 0,
\end{split}
\right.
\end{gather*}
and by 
\begin{equation*}
N(\lda, [a,b]) = \#\{ n: \lda_n([a,b])<\lda \} 
\end{equation*}
their counting function. Note, that $\lda_0([a,b]) = 0$.

The following relations follow from the self-similarity of the measure $\mu$.
\begin{lemma}\label{lemma1}
$$\lda_n(I_i) = \tau^{-k_i}\lda_n([0,1]),$$
$$N(\lda, I_i) = N(\tau^{k_i}\lda, [0,1]).$$
\end{lemma}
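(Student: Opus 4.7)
The plan is to transfer eigenfunctions between $[0,1]$ and $I_i$ by pulling back along the similarity $S_i$, and to extract the scaling $\tau^{-k_i}$ from the Hutchinson relation for $\mu$.

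First I would set $h_i = b_i - a_i$ and, given a generalized solution $y\in C^1[0,1]$ of the Neumann problem on $[0,1]$ with eigenvalue $\lambda$, define its lift $\tilde y$ to $I_i$ by $\tilde y(x) = y(S_i^{-1}(x))$. The Neumann condition $y'(0) = y'(1) = 0$ immediately yields $\tilde y'(a_i) = \tilde y'(b_i) = 0$, regardless of $e_i$, because differentiation by the chain rule only contributes a factor $\pm h_i^{-1}$.

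Next I would verify the integral identity. Taking an arbitrary test function $\eta \in W_2^1(I_i)$ and writing $\phi(t) = \eta(S_i(t))$, the change of variable $x = S_i(t)$ (with $dx = h_i\, dt$) converts the left-hand side into $h_i^{-1}\int_0^1 y'\phi'\,dt$. For the right-hand side, the Hutchinson self-similarity restricted to $I_i$ gives
\begin{equation*}
\int_{I_i} f(x)\,\mu(dx) = \rho_i \int_0^1 f(S_i(t))\,\mu(dt)
\end{equation*}
for any integrable $f$ supported on $I_i$, so the right-hand side becomes $\lambda \rho_i \int_0^1 y\phi\,\mu(dt)$. Comparing the two, $\tilde y$ solves the problem on $I_i$ with eigenvalue $\lambda/(\rho_i h_i) = \tau^{-k_i}\lambda$, since $\rho_i h_i = \tau^{k_i}$ by the definition of arithmetic self-similarity. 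Reversing the construction shows that the lift is a bijection between eigenfunctions on $[0,1]$ at level $\lambda$ and eigenfunctions on $I_i$ at level $\tau^{-k_i}\lambda$.

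To match indices and conclude $\lambda_n(I_i) = \tau^{-k_i}\lambda_n([0,1])$, I would invoke Proposition~\ref{oscil}: the $n$-th eigenfunction on either segment is characterized by having exactly $n$ interior zeros, and the lift by the strictly monotone affine map $S_i$ preserves the number of zeros. The counting function identity $N(\lambda, I_i) = N(\tau^{k_i}\lambda, [0,1])$ is then immediate from the definition of $N$ and the strict monotonicity in $\lambda$ of the condition $\lambda_n < \lambda$.

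The only mildly delicate point will be checking that the case $e_i = 1$ (orientation-reversing $S_i$) does not change the conclusion: the Jacobian changes sign but appears squared in the quadratic form, the boundary roles of $0,1$ and $a_i,b_i$ simply swap (and both carry Neumann conditions, so this is harmless), and the number of zeros is still preserved. I do not anticipate a real obstacle here; the proof is essentially the standard self-similarity scaling argument, with arithmetic self-similarity entering only to package $\rho_i(b_i-a_i)$ as $\tau^{k_i}$.
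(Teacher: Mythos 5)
Your proof is correct and follows essentially the same route as the paper: a change of variables along the similarity $S_i$, extraction of the scaling factor $\rho_i(b_i-a_i)=\tau^{k_i}$ from the self-similarity of $\mu$, and index matching via the oscillation count of Proposition~\ref{oscil}. The only cosmetic differences are that you transfer eigenfunctions from $[0,1]$ to $I_i$ (the paper goes the other way) and work with the weak integral identity rather than deriving $\mu\circ S_i=\rho_i(b_i-a_i)^{-1}\mu$ from the functional equation for the ladder $C$; both are equivalent.
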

\begin{proof}
These two relations are equivalent. To prove the first one let's consider the eigenfunction $y_n$ corresponding to the eigenvalue $\lda_n(I_i)$ and let's define function $z$ on $[0,1]$ as
\[
z = y_n \circ S_i,
\]
where $S_i$ is an affine contraction defined in \S 2. It's clear, that the function $z$ satisfies the Neumann boundary conditions on $[0,1]$, and the following relation holds
\[
z'' = (y_n''\circ S_i) \cdot (b_i-a_i)^2 = \lda_n(I_i)(b_i-a_i)^2 \cdot (\mu\circ S_i)\cdot (y_n\circ S_i) .
\]
Note also, that
\[
\mathcal{C}\circ S_i = \mathcal{S}(\mathcal{C}) \circ S_i = \rho_i \cdot (e_i + (-1)^{e_i}\mathcal{C}) + \sum\limits_{j=1}^{i-1} \rho_j,
\]
whence, by differentiating we obtain
\[
\mu\circ S_i = \rho_i(b_i-a_i)^{-1}\mu,
\]
Thus,
\[
z'' = \lda_n(I_i)\rho_i(b_i-a_i) \mu z = \lda_n(I_i)\tau^{k_i} \mu z.
\] 
Thereby, function $z$ corresponds to the eigenvalue $\lda_n(I_i)\tau^{k_i}$ of the Neumann problem on $[0,1]$ and has exactly $n$ zeroes on it, thus, the proof is complete.
\end{proof}

We now prove the main statement of this section.

\begin{theorem}\label{4.1}
Let $J_1 = [c_1, d_1]$, $J_2 = [c_2, d_2]$ 
be subsegments of $[0,1]$, such that $c_2-d_1 \geqslant 0$, and $\mu|_{[d_1, c_2]} \equiv 0$. Denote $J:=[c_1, d_2]$.
Then the function
\begin{equation}\label{Fdef}
	F(\lda) := N(\lda, J) - N(\lda, J_1)-N(\lda, J_2)
\end{equation}
has discontinuities $\lda_n(J)$, $\lda_n(J_1)$, $\lda_n(J_2)$. Further, the elements of  $\{ \lda_n(J) \}_{n=0}^\infty$ and $\{\lda_n(J_1) \}_{n=0}^\infty \cup \{\lda_n(J_2) \}_{n=0}^\infty$ are non-strictly interlacing beginning with the element of the latter. Moreover, $F$ changes its value from $0$ to $-1$ at the points $\{\lda_n(J_1) \}_{n=0}^\infty \cup \{\lda_n(J_2) \}_{n=0}^\infty$ and from $-1$ to $0$ at the points $\{ \lda_n(J) \}_{n=0}^\infty$ not included in $\{\lda_n(J_1) \}_{n=0}^\infty \cup \{\lda_n(J_2) \}_{n=0}^\infty$.
\end{theorem}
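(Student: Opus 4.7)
The plan is a rank-one interlacing argument applied to a convenient decoupling of the Neumann problem on $J$, after first observing that the zero-weight middle interval $[d_1,c_2]$ is spectrally ``invisible''.

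\emph{Step 1 (padded-segment identification).} I would show that the Neumann problem for $-y''=\lda\mu y$ on $[d_1,d_2]$ has exactly the spectrum $\{\lda_n(J_2)\}_{n\ge 0}$, with matching multiplicities and zero counts. Since $\mu\equiv 0$ on $[d_1,c_2]$, every generalized solution is affine there, so the Neumann condition at $d_1$ forces an eigenfunction $\tilde y$ to be constant on $[d_1,c_2]$. If that constant were zero, then $\tilde y(c_2)=\tilde y'(c_2)=0$ and uniqueness for the Cauchy problem would yield $\tilde y\equiv 0$ on $J_2$, contradicting nontriviality. Hence $\tilde y|_{J_2}$ is a Neumann eigenfunction on $J_2$ at the same $\lda$; conversely, any Neumann eigenfunction on $J_2$ extends by the constant $y(c_2)$ to such an eigenfunction on $[d_1,d_2]$, preserving the number of interior zeros.

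\emph{Step 2 (interlacing via a rank-one restriction).} I would then compare the Neumann problem on $J$ with the ``decoupled'' form on $W_2^1(J_1)\oplus W_2^1([d_1,d_2])$, acting on pairs allowed to jump at $d_1$, with the obvious form $\int|y_1'|^2+\int|y_2'|^2$ and mass from $\mu$. By Step~1 the eigenvalues $\mu_0\le\mu_1\le\ldots$ of this decoupled problem constitute, as a multiset, the union $\{\lda_n(J_1)\}\cup\{\lda_n(J_2)\}$. The problem on $J$ is the same form and mass restricted to the codimension-one subspace $\{y_1(d_1)=y_2(d_1)\}$, the mass being identified in the two settings because $\mu$ carries no weight on $[d_1,c_2]$. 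The standard min--max consequence of a codimension-one restriction (Poincar\'e--Cauchy interlacing) then yields, writing $\nu_k:=\lda_k(J)$,
\[
\mu_k \;\le\; \nu_k \;\le\; \mu_{k+1},\qquad k\ge 0,
\]
which is the non-strict interlacing asserted, beginning with $\mu_0$; simplicity of each $\nu_k$ is provided by Proposition~\ref{oscil}.

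\emph{Step 3 (reading off the jumps of $F$).} The rest is inspection: on each interval $(\mu_k,\nu_k)$ one has $N(\lda,J_1)+N(\lda,J_2)=k+1$ and $N(\lda,J)=k$, so $F=-1$, while on each $(\nu_k,\mu_{k+1})$ both counts equal $k+1$ and $F=0$. Thus $F$ jumps from $0$ to $-1$ at each point of the union and back from $-1$ to $0$ at each $\lda_n(J)$ lying outside the union; a multiplicity-two point of the union is forced by the interlacing to coincide with some $\nu_k$ and absorbs both jumps into a single net drop by one. I expect the main technical obstacle to be the correct variational setup of this generalized eigenvalue problem when $\mu$ vanishes on sets of positive Lebesgue measure, but this is routine within the framework recalled at the start of \refpar{3}.
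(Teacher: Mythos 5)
Your Steps 1 and 2 are sound and in substance coincide with the paper's argument: the paper also makes a codimension-one variational comparison, only it phrases it as two quadratic forms on the same domain (functions linear on $[d_1,c_2]$), one with and one without the Dirichlet integral over the middle interval, rather than as a decoupled space with a jump at $d_1$. Either packaging yields $-1\le F\le 0$, hence the non-strict interlacing $\mu_k\le\nu_k\le\mu_{k+1}$ and the values of $F$ on the open intervals between consecutive points. Your padding argument for $[d_1,d_2]$ versus $J_2$ is a clean way to handle the zero-weight interval.

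The gap is in Step 3. From the non-strict interlacing alone you cannot conclude that $F$ drops from $0$ to $-1$ at \emph{every} point of $\{\lda_n(J_1)\}\cup\{\lda_n(J_2)\}$: the interlacing permits the configuration in which some $\nu_k=\lda_k(J)$ coincides with a \emph{simple} point of the union (say $\nu_k=\mu_{k+1}$ with $\mu_{k+1}=\lda_j(J_1)$ only). At such a point $N(\cdot,J)$ and $N(\cdot,J_1)+N(\cdot,J_2)$ each jump by $1$, the net jump of $F$ is $0$, and the asserted $0\to-1$ transition fails; your "inspection" only treats the multiplicity-two coincidence. What is missing is the fact that coincidences come in triples: if $\lda_n(J)$ is also an eigenvalue of the Neumann problem on $J_1$, then the restriction $y_n|_{J_1}$ of the $J$-eigenfunction solves the equation on $J_1$ with $y_n'(c_1)=0$, hence by uniqueness of the Cauchy problem it is proportional to the Neumann eigenfunction of $J_1$, so $y_n'(d_1)=0$; since $y_n'$ is constant on $[d_1,c_2]$ this gives $y_n'(c_2)=0$, and $y_n|_{J_2}$ is then a (nonzero, again by Cauchy uniqueness) Neumann eigenfunction on $J_2$. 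Thus a $J$-eigenvalue that meets the union always meets it with multiplicity two, which together with Proposition~\ref{oscil} (simplicity, so each counting function jumps by exactly $1$) restores your bookkeeping. Without this ODE-level ingredient — which the paper supplies and which cannot be extracted from min--max alone — the "Moreover" clause of the theorem is not proved.
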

\begin{proof}
Consider the quadratic form
\[
Q_1(y,y) := \int\limits_{J} |y'|^2 dt, \quad \mathcal{D}(Q_1) = \big\{y\in W_2^1(J) : y \text{ is linear on } [d_1, c_2]  \big\}.
\]
We recall (see, e.g. \cite[\S10.2]{BS2}), that the counting function $N(\lambda, J)$ could be expressed in terms of this quadratic form:
\[
N(\lambda, J) = \sup \dim \big\{ \mathcal{H}\subset\mathcal{D}(Q_1) : Q_1(y, y) < \lambda\int\limits_{J} y^2(t)\mu(dt) \text{ on } \mathcal{H} \big\}.
\]
Similarly, if we consider the quadratic form
\[
Q_2(y,y) := \int\limits_{J_1} |y'|^2 dt + \int\limits_{J_2} |y'|^2 dt, \quad \mathcal{D}(Q_2) = \big\{y\in W_2^1(J) : y \text{ is linear on } [d_1, c_2]  \big\},
\]
then
\[
N(\lambda, J_1)+N(\lambda, J_2) = \sup \dim \big\{ \mathcal{H}\subset\mathcal{D}(Q_2) : Q_2(y, y) < \lambda\int\limits_{J} y^2(t)\mu(dt) \text{ on } \mathcal{H} \big\}.
\]
We note, that the quadratic forms differ by a positive term
\[
Q_1(y,y) - Q_2(y,y) = \int\limits_{d_1}^{c_2} |y'| dt,
\]
and coincide on a space of codimension $1$:
\[
Q_1(y,y) = Q_2(y,y) \text{ on } \big\{ y\in W_2^1(J) : y \text{ is constant on } [d_1, c_2] \big\}.
\]
Thus,
\begin{equation}\label{eq:4.1}
-1 \leq F(\lambda) \leq 0.
\end{equation}
Note now, that if some point is a discontinuity of two terms in the right side of \eqref{Fdef}, then it is the discontinuity of the third term as well. For example, let $\lda_n(J)$ be a discontinuity of $N(\lambda, J_1)$. Consider the eigenfunction $y_n$ on $J$ corresponding to $\lambda_n(J)$. Then $y_n|_{J_1}$ is an eigenfunction, corresponding to $\lambda_k(J_1)$ for some $k$, $y'_n(d_1) = 0$, thus $y'_n(c_2)=0$ as well, and $y_n|_{J_2}$ is an eigenfunction of the Neumann problem on $J_2$, which means that $\lambda_n(J)$ is also a discontinuity point of $N(\lambda, J_2)$. Note also, that according to Proposition \ref{oscil} every term changes exactly by $1$ at every discontinuity point.

This implies, that 
$F$ decreases by $1$ at all points of $\{\lda_n(J_1) \}_{n=0}^\infty \cup \{\lda_n(J_2) \}_{n=0}^\infty$ (at each point either only one term changes by $1$, or all three do). By \eqref{eq:4.1} it changes its value from $0$ to $-1$. Obviously, it must change the value from $-1$ to $0$ at all other discontinuities, which are the elements of $\{\lda_n(J) \}_{n=0}^\infty$ not included in $\{\lda_n(J_1) \}_{n=0}^\infty \cup \{\lda_n(J_2) \}_{n=0}^\infty$.
Moreover, no two points from the collection $\{\lda_n(J_1) \}_{n=0}^\infty \cup \{\lda_n(J_2) \}_{n=0}^\infty$ could go in a row without a point from $\{\lda_n(J) \}_{n=0}^\infty$ between them. Similarly, two points form $\{\lda_n(J) \}_{n=0}^\infty$ could not go in succession without a point from $\{\lda_n(J_1) \}_{n=0}^\infty \cup \{\lda_n(J_2) \}_{n=0}^\infty$ between them, which implies, that two collections non-strictly interlace. Since $F(0) = 0$, the interlacing starts with an element of $\{\lda_n(J_1) \}_{n=0}^\infty \cup \{\lda_n(J_2) \}_{n=0}^\infty$.
\end{proof}

\begin{remark}
The proof of Theorem \ref{4.1} does not require $c_2-d_1>0$, so it doesn't use the restriction $a_{i}-b_{i-1}>0$, thus it could be used even when the ladder has empty intermediate intervals. Moreover, the proof uses the eigenfunction oscillation properties (Proposition \ref{oscil}), but does not use the self-similarity of measure $\mu$.

In the case of $m=2$, $k_1=k_2=1$, 
Theorem \ref{4.1} and Lemma \ref{lemma1} imply, that
\[
N(\tau^{-1}\lda_n) = 2N(\lda_n)
\]
and, respectively,
\[
\tau\lda_{2n} = \lda_n.
\]
This relation is called \emph{spectral periodicity} in \cite{VSh3}, \cite{Rast1}, \cite{V3} and \emph{renormalization property} after \cite{KL}.
\end{remark}

\section{Main result proof}\label{par:4}
To prove Theorem \ref{main_theo} we need the following facts:

\begin{proposition}\label{sing}
\textbf{\textsc{(\cite[Proposition~4.1.3]{VSh3})}}
Let  $f\in L_2[0,1]$ be a bounded non-decreasing non-constant function,
let $\{f_n\}_{n=0}^{\infty}$ be a sequence of non-decreasing non-constant step functions and let $\{\mathfrak A_n\}_{n=0}^{\infty}$ be the sequence of  discontinuity point sets of functions $f_n$. Suppose also that the following asymptotic relation holds as  $n\to\infty$:
\[
	\#\mathfrak A_n\cdot\|f-f_n\|_{L_2[0,1]}=o(1).
\]
Then the monotone function $f$ is purely singular.
\end{proposition}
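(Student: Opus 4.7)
The plan is to argue by contradiction: assume $f$ is not purely singular and show that $N_n\|f-f_n\|_{L_2}$ (with $N_n:=\#\mathfrak A_n$) is bounded below by a positive constant. The Lebesgue decomposition writes $f=f_{ac}+f_s$ with both summands non-decreasing, and non-singularity means that the non-negative function $g:=f_{ac}'\in L_1[0,1]$ satisfies $\int g>0$; hence there exist $\alpha>0$ and a set $A\subset[0,1]$ of measure $\beta>0$ on which $g\ge\alpha$.

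My first reduction is to pass from $L_2$ to $L_1$ via Cauchy--Schwarz, $\|f-f_n\|_{L_1}\le\|f-f_n\|_{L_2}$. On each constancy interval $I_j=[a_{j-1},a_j]$ of $f_n$ the best constant $L_1$-approximation to a non-decreasing $h$ is its median, and one has the explicit formula
\[
\min_c\int_a^b|h-c|\,dx=\int_{(a+b)/2}^{b} h\,dx-\int_a^{(a+b)/2} h\,dx,
\]
whose right-hand side is additive in $h$. Applying this to $f=f_{ac}+f_s$ on $I_j$, the contribution of the non-decreasing $f_s$ to the right-hand side is non-negative, so it can be discarded; this is the Chebyshev-type correlation step, giving
\[
\int_{I_j}|f-f_n|\,dx\ge \int_{m_j}^{a_j} f_{ac}\,dx-\int_{a_{j-1}}^{m_j} f_{ac}\,dx
\]
with $m_j:=(a_{j-1}+a_j)/2$. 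Substituting the antiderivative representation $f_{ac}(x)=f_{ac}(a_{j-1})+\int_{a_{j-1}}^{x}g\,dt$ and applying Fubini converts the right-hand side into $\int_{I_j}g(t)\min(t-a_{j-1},a_j-t)\,dt$; summation over $j$ yields
\[
\|f-f_n\|_{L_1}\ge\int_0^1 g(t)\,d(t,\mathfrak B_n)\,dt,\qquad \mathfrak B_n:=\mathfrak A_n\cup\{0,1\}.
\]

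To finish I use $|\mathfrak B_n|\le N_n+2$ together with the trivial covering estimate that the $\varepsilon$-neighbourhood of $\mathfrak B_n$ has Lebesgue measure at most $2(N_n+2)\varepsilon$. Choosing $\varepsilon=\beta/(4(N_n+2))$ leaves a subset of $A$ of measure at least $\beta/2$ on which $d(t,\mathfrak B_n)\ge\varepsilon$, so the weighted distance integral is bounded below by $\alpha\cdot(\beta/2)\cdot\varepsilon=\alpha\beta^2/(8(N_n+2))$, and therefore $N_n\|f-f_n\|_{L_2}\ge\alpha\beta^2/24$ for every $n$ with $N_n\ge1$, the desired contradiction. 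The only step requiring real care is the Chebyshev-type passage from $f$ to $f_{ac}$, which relies on the monotonicity but not the continuity of $f_s$, so the possible jumps of the singular part cause no trouble; the remaining ingredients are a routine application of Fubini and a covering argument.
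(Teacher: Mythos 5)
The paper does not actually prove this proposition---it is imported verbatim from \cite{VSh3} (Proposition~4.1.3 there), so there is no in-paper argument to compare yours against; I can only judge your proof on its own terms, and it is correct and complete. Your route is the natural quantitative one: assuming $f$ is not purely singular, you extract $\alpha,\beta>0$ with $f'\ge\alpha$ on a set of measure $\beta$, and then show that any non-decreasing step function with $N$ jumps misses $f$ in $L_1$ (hence in $L_2$, by Cauchy--Schwarz on $[0,1]$) by at least $c(\alpha,\beta)/N$. Every individual step checks out: the bound $\int_a^b|h-c|\,dx\ge\int_{m}^{b}h-\int_a^{m}h$ with $m=(a+b)/2$ holds for \emph{every} constant $c$ (you do not even need to identify the median as the minimizer, since $|h-c|\ge h-c$ and $|h-c|\ge c-h$); the additivity of that lower bound in $h$ together with monotonicity of $f_s$ lets you discard the singular part; the Fubini computation correctly produces $\int_{I_j}f'(t)\min(t-a_{j-1},a_j-t)\,dt$, i.e.\ the weight $d(t,\mathfrak B_n)$; and the covering argument with $\varepsilon=\beta/(4(N_n+2))$ gives the uniform lower bound $N_n\|f-f_n\|_{L_2}\ge\alpha\beta^2/24$ (using $N_n\ge1$, guaranteed since $f_n$ is non-constant), contradicting the $o(1)$ hypothesis. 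One cosmetic remark: what you call the ``Chebyshev-type correlation step'' is really just the observation that a non-decreasing function has larger average over the right half of an interval than over the left half; no correlation inequality is needed. The argument is self-contained and elementary, which is arguably an improvement over leaving the reader to chase the reference.
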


\begin{proposition}\label{5.2}
\textbf{\textsc{(\cite[Proposition~5.2.1]{VSh3})}}
Let $\{\lambda_n\}_{n=0}^{\infty}$ be a sequence of the eigenvalues of boundary value problem
\begin{gather*}
	-y''-\lambda\mu y=0,\\ 
	y'(a)=y'(b)=0,
\end{gather*}
numbered in ascending order. 
Let $\{\nu_n\}_{n=0}^{\infty}$ be a similar sequence corresponding to the boundary value problem
\[
	y'(a)-\gamma_0y(a)=y'(b)+\gamma_1y(b)=0
\]
for the same equation. Then
\[
	\sum\limits_{n=1}^{\infty} |\ln\nu_n-\ln\lambda_n|<+\infty.
\]
\end{proposition}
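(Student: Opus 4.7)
The strategy is to pass continuously from the Neumann problem ($t=0$) to the mixed problem ($t=1$) along the family of boundary conditions $y'(a)-t\gamma_0y(a)=y'(b)+t\gamma_1y(b)=0$, and to control the eigenvalue drift by means of the Hellmann--Feynman first variation formula. By splitting each of $\gamma_0,\gamma_1$ into positive and negative parts and handling the resulting two legs separately, it suffices to treat the case $\gamma_0,\gamma_1\geq 0$, in which every summand $\ln\mu_n-\ln\lambda_n$ will have a definite sign, so that absolute summability reduces to summability of the signed series.

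Let $\lambda_n(t)$ denote the $n$th eigenvalue of the family and $\phi_n(t;\cdot)$ the corresponding $L_2(\mu)$-normalised eigenfunction. Since the associated quadratic form $\int|y'|^2\,dx+t\gamma_0 y(a)^2+t\gamma_1 y(b)^2$ on $W_2^1[a,b]$ depends on $t$ only through bounded rank-one boundary terms, analytic perturbation theory gives $\lambda_n(\cdot)\in C^1([0,1])$ together with
\[
\dot\lambda_n(t)=\gamma_0\,\phi_n^2(t;a)+\gamma_1\,\phi_n^2(t;b).
\]
For $n\geq 1$ and $\gamma_i\geq 0$ we have $\lambda_n(t)\geq\lambda_n>0$, hence integration yields $\ln\mu_n-\ln\lambda_n=\int_0^1\dot\lambda_n(t)/\lambda_n(t)\,dt$. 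Summing over $n\geq 1$ and applying Tonelli's theorem to the non-negative integrand gives
\[
\sum_{n\geq 1}\bigl(\ln\mu_n-\ln\lambda_n\bigr)=\int_0^1\bigl[\gamma_0\,G_t(a,a)+\gamma_1\,G_t(b,b)\bigr]\,dt,
\]
where $G_t(x,y)=\sum_{n\geq 1}\phi_n(t;x)\phi_n(t;y)/\lambda_n(t)$ is the integral kernel of the inverse of the operator associated to the family (with the ground-state mode excised at $t=0$).

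It remains to prove that $G_t(a,a)$ and $G_t(b,b)$ stay bounded uniformly on $t\in[0,1]$. This is the main obstacle, and it concentrates at the endpoint $t=0$, where the operator degenerates on the constants: one has to exhibit the reduced Green's function of the Neumann problem as an explicit double integral against $\mu$ and Lebesgue measure on $[a,b]$ to obtain $G_0(a,a),G_0(b,b)<\infty$, and then extend the bound to $t>0$ by a continuity/monotonicity argument using that the form $a_t$ is non-decreasing in $t$. As a consistency check, the trivial summability $\sum_{n\geq 1}1/\lambda_n<\infty$ is immediate from the spectral asymptotics $\lambda_n\asymp n^{1/D}$ with $D<\tfrac12$ recalled in \refpar{2}, confirming that the Green's function trace underlying the bound is controlled on the correct scale.
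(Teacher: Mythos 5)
The paper does not actually prove this statement: it is imported verbatim from \cite{VSh3} (Proposition~5.2.1) and used as a black box, so there is no in-paper argument to compare yours against; your proposal has to stand on its own. The skeleton you chose --- the homotopy $t\mapsto(t\gamma_0,t\gamma_1)$, the reduction to $\gamma_0,\gamma_1\ge0$ by splitting into positive and negative parts, the Hellmann--Feynman formula $\dot\lambda_n(t)=\gamma_0\phi_n^2(t;a)+\gamma_1\phi_n^2(t;b)$ (legitimate here because the eigenvalues are simple by Proposition~\ref{oscil} and the boundary terms are form-bounded with arbitrarily small bound relative to the Neumann form), and the identity $\ln\mu_n-\ln\lambda_n=\int_0^1\dot\lambda_n(t)/\lambda_n(t)\,dt$ --- is sound, apart from the minor caveat that for a negative $\gamma_i$ finitely many eigenvalues may become non-positive, so the logarithms must be restricted to $n\ge n_0$.

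The genuine gap is exactly the step you label ``the main obstacle'' and then defer: the uniform (or at least integrable-in-$t$) bound on $H(t):=\sum_{n\ge1}\phi_n^2(t;a)/\lambda_n(t)$. The monotonicity of the form $a_t$ controls the \emph{full} Green's function diagonal $\sum_{n\ge0}\phi_n^2(t;a)/\lambda_n(t)$, but that quantity blows up like $c/t$ as $t\to0^{+}$, since $\lambda_0(t)\sim t(\gamma_0+\gamma_1)/\mu[a,b]$ while $\phi_0(t;a)$ stays bounded away from zero; and $c/t$ is not integrable on $(0,1]$. Your reduced kernel $H(t)$ is the difference of two quantities each of order $1/t$, and it is \emph{not} monotone in $t$ because the excised ground state rotates with $t$; so neither the ``monotonicity'' nor the ``continuity'' half of your proposed fix closes the argument --- continuity of $H$ at $t=0$ \emph{is} the delicate second-order cancellation you would need to prove. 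Note also that this cannot be patched by soft means: rank-two interlacing gives only $\mu_n\le\lambda_{n+2}$, and $\sum(\lambda_{n+2}-\lambda_n)/\lambda_n$ diverges (the terms are of order $1/n$ for $\lambda_n\asymp n^{1/D}$), so some genuinely quantitative control of the boundary values $\phi_n^2(t;a),\phi_n^2(t;b)$ against $\lambda_n(t)$ is unavoidable. As written, the proposal reduces the proposition to an unproved claim that carries essentially all of its content.
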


\begin{remark}
This result could be rewritten as
\[
\prod\limits_{n=1}^\infty \dfrac{\nu_n}{\lambda_n} = \exp\sum\limits_{n=1}^{\infty} |\ln\nu_n-\ln\lambda_n| < +\infty.
\]
For more general results about similar products of eigenvalue ratios see \cite{Naz2}.
\end{remark}

Let the assumptions of Theorem \ref{4.1} be fulfilled. Define $F$ by the relation \eqref{Fdef}. Denote by $\{ \mu_n(J) \}_{n=0}^\infty$ the elements of the collection $\{\lda_n(J_1) \}_{n=0}^\infty \cup \{\lda_n(J_2) \}_{n=0}^\infty$ numbered in ascending order. By Theorem \ref{4.1} we have
\[
F(\lambda) = -1 \quad \Longleftrightarrow \quad \lambda \in \bigcup\limits_{n=0}^{\infty} (\mu_n(J), \lambda_n(J)]. 
\]

We recall that $\mu_0(J) = \lambda_0(J) = \mu_1(J) = 0$, but the rest of $\mu_n(J)$, $\lambda_n(J)$ are greater than zero, and we will now prove, that the set $\{\ln\lambda : F(\lambda) = -1  \}$ has finite measure, i.e. 
\[
\left|\bigcup\limits_{n=2}^{\infty} (\ln\mu_n(J), \ln\lambda_n(J)]\right| < +\infty.
\]

\begin{theorem}\label{Th5.1}
Let the assumptions of Theorem \ref{4.1} be fulfilled and let $c_2-d_1>0$.  Then
\[
\sum\limits_{n=2}^{\infty}|\ln\lambda_{n}(J)-\ln\mu_{n}(J)| < +\infty.
\]
\end{theorem}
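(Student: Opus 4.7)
The strategy is to sandwich the Neumann eigenvalues $\lambda_n(J)$ between the merged Neumann spectrum $\mu_n(J)$ and a merged Robin spectrum on $J_1 \sqcup J_2$, and then apply Proposition \ref{5.2} on each subinterval.

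From the proof of Theorem \ref{4.1}, the Neumann form on $J$, restricted without loss of generality to functions linear on the gap $[d_1,c_2]$, satisfies $Q_1 = Q_2 + \gamma\bigl(y(c_2)-y(d_1)\bigr)^2$ with $\gamma := (c_2-d_1)^{-1}$. I will introduce the ``doubled Robin'' form
\[
Q_{R'}(y,y) := \int_{J_1}|y'|^2\,dt + \int_{J_2}|y'|^2\,dt + 2\gamma\,y(d_1)^2 + 2\gamma\,y(c_2)^2,
\]
corresponding to the decoupled problem on $J_1\sqcup J_2$ with Robin condition of parameter $2\gamma$ at the inner endpoints $d_1, c_2$ and Neumann at the outer endpoints $c_1, d_2$. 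A direct algebraic check yields $Q_{R'} - Q_1 = \gamma\bigl(y(d_1)+y(c_2)\bigr)^2 \ge 0$, which, combined with $Q_1 - Q_2 \ge 0$, gives the two-sided form inequality $Q_2 \le Q_1 \le Q_{R'}$. By the min-max principle $\mu_n(J) \le \lambda_n(J) \le \nu_n$ for all $n$, where $\{\nu_n\}$ is the merged ascending sequence of Robin eigenvalues on $J_1$ and $J_2$. The target inequality thus reduces to showing that $\sum_{n\ge 2}(\ln\nu_n - \ln\mu_n(J)) < +\infty$.

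For this bound I will rewrite the sum as the integral
\[
\sum_{n\ge 2}(\ln\nu_n-\ln\mu_n(J)) = \int_0^\infty \chi(\lambda)\,\lambda^{-1}\,d\lambda,
\]
where $\chi(\lambda) := \#\{n\ge 2 : \mu_n(J)<\lambda\le\nu_n\}$. Since $\mu_2(J)>0$, $\chi$ vanishes on $(0,\mu_2(J))$, and on $[\mu_2(J),\infty)$ it is dominated by the discrepancy between the two merged counting functions, which splits additively as the sum of per-subinterval Neumann-versus-Robin counting-function discrepancies. On each subinterval, the integral of that discrepancy against $d\lambda/\lambda$ coincides, up to a finite boundary contribution from the lowest (strictly positive) Robin eigenvalue on that subinterval, with the sum of Proposition \ref{5.2} applied to the pair (Neumann, Robin with parameter $2\gamma$) on that subinterval, which is finite.

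The main technical delicacy is the bookkeeping near $\lambda=0$: the decoupled Neumann form $Q_2$ has a double zero eigenvalue (constants on each subinterval) whereas $Q_{R'}$ has all strictly positive eigenvalues, so the two counting functions differ by $2$ near the origin. Consequently the unrestricted sum $\sum_{n\ge 0}(\ln\nu_n - \ln\mu_n(J))$ diverges trivially, and it is precisely the omission of the indices $n=0,1$ in the statement that absorbs this divergence. The argument must verify carefully that after this cancellation the remaining mass of $\chi$ on $[\mu_2(J),\infty)$ is controlled by the finite per-subinterval log-sums produced by Proposition \ref{5.2}.
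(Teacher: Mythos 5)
Your proposal is correct, and the key step is genuinely different from the paper's. Where the paper fixes the eigenfunction $y_n$ of $\lambda_n(J)$, uses its linearity on the gap to show that at one of the two inner endpoints the logarithmic derivative lies in $[0,\tfrac{2}{c_2-d_1}]$, and then sandwiches $\lambda_n(J)$ between $\lambda_k(J_i)$ and $\nu_k^{(i)}$ for an index $k$ identified via the oscillation count (Proposition \ref{oscil}), you replace this dichotomy by the purely variational two-sided form comparison $Q_2\le Q_1\le Q_{R'}$, resting on the identity $Q_{R'}-Q_1=\gamma\bigl(y(d_1)+y(c_2)\bigr)^2\ge 0$ with $\gamma=(c_2-d_1)^{-1}$ (which I checked: it is exactly why the Robin parameter $2/(c_2-d_1)$, the same one the paper uses, is the right choice). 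This buys you the termwise bound $\mu_n(J)\le\lambda_n(J)\le\nu_n$ against the merged Robin sequence \emph{with the same index} $n$, so you avoid the paper's covering argument (``each of the disjoint intervals $(\mu_n(J),\lambda_n(J)]$ lies in some segment $[\lambda_k(J_i),\nu_k^{(i)}]$''), needing instead only the standard identity expressing $\sum(\ln\nu_n-\ln\mu_n(J))$ as $\int (N_\mu-N_\nu)\,\lambda^{-1}d\lambda$, the additive splitting of the counting-function discrepancy over $J_1$ and $J_2$, and Proposition \ref{5.2} on each piece — exactly the same final ingredient as the paper. Your handling of the $n=0,1$ terms and of the $k=0$ Robin ground states (finite contribution over $[\mu_2(J),\nu_0^{(i)}]$ in logarithmic measure) matches the paper's discarding of the $k=0$ segments and of finitely many initial intervals. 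The paper's route has the side benefit of identifying which subinterval ``carries'' each $\lambda_n(J)$, but your argument is shorter and dispenses with the oscillation theory at this point.
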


\begin{proof}
Denote by $\nu_n^{(1)}$ the eigenvalues of the problem
\begin{gather*}
\left\{
\begin{split}
&-y'' = \nu\mu y, \\
&y'(c_1) = y'(d_1) + \dfrac{2}{c_2-d_1}\cdot y(d_1) = 0,
\end{split}
\right.
\end{gather*}
and by $\nu_n^{(2)}$ --- the eigenvalues of the problem
\begin{gather*}
\left\{
\begin{split}
&-y'' = \nu\mu y, \\
&y'(c_2) - \dfrac{2}{c_2-d_1}\cdot y(c_2) = y'(d_2) = 0.
\end{split}
\right.
\end{gather*}
Let's fix an eigenfunction $y_n$ corresponding to $\lda_n(J)$ and consider its restrictions on segments $J_1$ and $J_2$.
Since $\mu|_{[d_1,c_2]}\equiv 0$, the function $y_n|_{[d_1, c_2]}$ is linear, which means that
\[
\dfrac{y_n(c_2)}{y_n'(c_2)}-\dfrac{y_n(d_1)}{y_n'(d_1)} = c_2-d_1.
\]
This implies
\[
\dfrac{y_n(c_2)}{y_n'(c_2)} \geqslant \dfrac{c_2-d_1}{2} \mbox{ or }
-\dfrac{y_n(d_1)}{y_n'(d_1)} \geqslant \dfrac{c_2-d_1}{2},
\]
which means that one of the following estimates holds:
\begin{equation}\label{eq:5.1}
0 \leqslant -\dfrac{y_n'(d_1)}{y_n(d_1)}\leqslant \dfrac{2}{c_2-d_1},
\end{equation}
or
\[
0 \leqslant \dfrac{y_n'(c_2)}{y_n(c_2)}\leqslant \dfrac{2}{c_2-d_1}.
\]
Note, that $\lambda_n(J)$ is an eigenvalue of the problem 
\begin{gather*}
\left\{
\begin{split}
&-y'' = \lambda\mu y, \\
&y'(c_1) = y'(d_1) + \gamma\cdot y(d_1) = 0,
\end{split}
\right.
\end{gather*}
with $\gamma = -\dfrac{y_n'(d_1)}{y_n(d_1)}$. Its number is the same as the number of zeroes of $y_n$ inside $J_1$. Note also, that $\lambda_k(J_1)$ is the eigenvalue of the same problem with $\gamma=0$ for any $k\in\mathbb{N}$, and $\nu_k^{(1)}$ is an eigenvalue of the same problem with $\gamma =  \dfrac{2}{c_2-d_1}$.

By the variational principle, this implies, that if \eqref{eq:5.1} holds, then 
$$\lda_k(J_1) \leqslant \lda_n(J) \leqslant \nu_k^{(1)},$$ 
where $k$ is the number of zeroes of $y_n$ inside $J_1$. Otherwise, similar argument shows, that $$\lda_k(J_2) \leqslant \lda_n(J) \leqslant \nu_k^{(2)},$$ where $k$ is the number of zeroes of $y_n$ inside $J_2$. Further, by Theorem \ref{4.1} the collections $\{\mu_n(J)\}_{n=0}^\infty$ and $\{\lda_n(J)\}_{n=0}^\infty$ non-strictly interlace starting with $\mu_0(J)$. Therefore, we have $\mu_n(J) \leqslant \lda_n(J) \leqslant \mu_{n+1}(J)$. 
Since all $\lda_k(J_{1,2})$ belong to the set $\{\mu_n(J)\}_{n=0}^\infty$, the relation $\lda_k(J_{1,2})\leqslant\lda_n(J)$ implies $\lda_k(J_{1,2})\leqslant\mu_n(J)$. 
Thus we obtain for every $n$, that if \eqref{eq:5.1} holds, then there exists $k$, such that
$$\lda_k(J_1) \leqslant \mu_n(J) \leqslant \lda_n(J) \leqslant \nu_k^{(1)},$$ 
otherwise, there exists $k$, such that
$$\lda_k(J_2) \leqslant \mu_n(J) \leqslant \lda_n(J) \leqslant \nu_k^{(2)}.$$ 
Thus, each of the non-intersecting intervals $ (\mu_n(J), \lda_n(J)]$ is contained in the union $\Big( \bigcup\limits_{k=0}^\infty [\lda_k(J_1), \nu_k^{(1)}] \Big) \cup \Big( \bigcup\limits_{k=0}^\infty [\lda_k(J_2), \nu_k^{(2)}] \Big)$, which implies
\begin{equation}\label{th4eq0}
\bigcup\limits_{n=0}^\infty (\mu_n(J), \lda_n(J)] \subset \Big( \bigcup\limits_{k=0}^\infty [\lda_k(J_1), \nu_k^{(1)}] \Big) \cup \Big( \bigcup\limits_{k=0}^\infty [\lda_k(J_2), \nu_k^{(2)}] \Big),
\end{equation}
and if we discard the segments corresponding to $k=0$ in the right part of \eqref{th4eq0}, then we will only need to discard a finite number of intervals in the left part, in other words, there exists a number $n_0$, such that 
\begin{equation*} 
\bigcup\limits_{n=n_0}^\infty (\mu_n(J), \lda_n(J)] \subset \Big( \bigcup\limits_{k=1}^\infty [\lda_k(J_1), \nu_k^{(1)}] \Big) \cup \Big( \bigcup\limits_{k=1}^\infty [\lda_k(J_2), \nu_k^{(2)}] \Big),
\end{equation*}
and thus
\begin{gather}\label{th4eq1}
\begin{split}
\sum\limits_{n=n_0}^{\infty}|\ln\lambda_{n}(J)-\ln\mu_{n}(J)| &\leqslant 
 \sum\limits_{k=1}^{\infty}|\ln\nu_{k}^{(1)}-\ln\lda_{k}(J_1)| \\
 {} &+ \sum\limits_{k=1}^{\infty}|\ln\nu_{k}^{(2)}-\ln\lda_{k}(J_2)|.
\end{split}
\end{gather} 
From \eqref{th4eq1} using Proposition \ref{5.2} 
we obtain the estimate
\begin{gather*}
\sum\limits_{n=2}^{\infty}|\ln\lambda_{n}(J)-\ln\mu_{n}(J)| \leqslant 
 \sum\limits_{n=2}^{n_0-1}|\ln\lambda_{n}(J)-\ln\mu_{n}(J)| \\
 {} + \sum\limits_{k=1}^{\infty}|\ln\nu_{k}^{(1)}-\ln\lda_{k}(J_1)| +
 \sum\limits_{k=1}^{\infty}|\ln\nu_{k}^{(2)}-\ln\lda_{k}(J_2)| < +\infty.
\end{gather*}
\end{proof}

\begin{proof}[Proof of Theorem \ref{main_theo}]
By \eqref{eq:mes_asymp} we have
\[
N(\lda) = \lda^D\big(s(\ln \lda)+\varepsilon(\lda)\big),
\]
where $\varepsilon(\lda)\to 0$ as $\lda\to\infty$. For arbitrary $k\in\mathbb{N}$ we obtain
\[
N(\tau^{-k}\lda) = \tau^{-kD} \lda^D(s(\ln\lda) + \varepsilon(\tau^{-k}\lda)),
\]
whence $$s(\ln\lda)\lda^D = \lim\limits_{k\to\infty} \tau^{kD}N(\tau^{-k}\lda)$$
uniformly on any segment. Denote
\[
\sigma(t) := s(t) e^{Dt} = \lim\limits_{k\to\infty} \tau^{kD}N(\tau^{-k} e^t),
\quad t\in[0,T].
\]
We need to define a more suitable sequence of step functions approximating $\sigma$.

\paragraph{The case of $m=2$.}
Let's first consider for clarity the case of $m=2$.
Without loss of generality, consider $k_1\leqslant k_2$, where $k_{1,2}$ are introduced in Definition \ref{arithm}. Denote
\[
f_j(t) := C\tau^{jD}\sum_{i=0}^{k_2-1} C_i N(\tau^{-i-j}e^t),
\quad t\in[0,T].
\]
We wish to choose the coefficients $C_i$ in such a way, that
\begin{equation}\label{eq:5.2}
f_{j+1}(t) - f_j(t) = C\tau^{jD}\Big(N(\lda) - N(\tau^{k_1}\lda) - N(\tau^{k_2}\lda)\Big),
\end{equation}
where $\lda = \tau^{-k_2-j}e^t$. By direct calculations using \eqref{period} it is easy to demonstrate, that the wanted coefficient values are
\[
C_i = \left\{
\begin{aligned}
&\tau^{-(k_2-i)D} \quad && k_2-i=1,\ldots, k_1, \\
&\tau^{-(k_2-i)D}\cdot\big(1-\tau^{k_1 D}\big) \quad && k_2-i=k_1+1,\ldots, k_2.
\end{aligned}
\right.
\]
The second line doesn't manifest when $k_1 = k_2$.
Note, that
\[
\lim_{j\to+\infty}f_j(t) = C\sum\limits_{i=0}^{k_2-1} C_i\tau^{-iD} \lim_{j\to+\infty}\tau^{(i+j)D}N(\tau^{-i-j}e^t) = \sigma(t) \cdot C\sum\limits_{i=0}^{k_2-1} C_i\tau^{-iD},
\]
so, if we assign
\[
C := \Big(\sum_{i=0}^{k_2-1} C_i \tau^{-iD}\Big)^{-1},
\]
then we have
\[
\sigma(t) = \lim_{j\to+\infty}f_j(t).
\]
Note, also, that by Lemma \ref{lemma1} we have
\begin{equation}\label{eq:5.3}
N(\lda) - N(\tau^{k_1}\lda) - N(\tau^{k_2}\lda) = N(\lda, [0,1]) - N(\lda, I_1) - N(\lda, I_2),
\end{equation}
thus, using Theorem \ref{4.1} we obtain from \eqref{eq:5.2} the following estimate:
\begin{equation}\label{fmod}
|f_{j+1}(t) - f_j(t)| \leq C\tau^{jD}.
\end{equation}
Further, since $f_j$ is a sum of $k_2$ terms, each having no more than $N(\tau^{-k_2-j})$ discontinuity points, the number of discontinuities of $f_j$ could be estimated as
\begin{equation}\label{fbreaks}
\#\mathfrak A_j \leq k_2 N(\tau^{-k_2-j}) \leq \widetilde{C} \tau^{-jD}.
\end{equation}
All that's left in order to use the Proposition \ref{sing} is to prove the estimate
\begin{equation}\label{fmeas}
mes\{t\in[0, T]: f_j(t)\neq f_{j+1}(t)\} = o(1), \quad j\to\infty.
\end{equation}
Denote by $\{\mu_n\}_{n=0}^\infty$ the elements of the collection $\{\lda_n(I_1) \} \cup \{\lda_n(I_2) \}$ numbered in ascending order. 
Since $\lda_n$ and $\mu_n$ non-strictly interlace starting with $\mu_0$, we have $\mu_n \leqslant \lda_n$ for all $n\geqslant 0$.  Considering \eqref{eq:5.2} and \eqref{eq:5.3}, in order to prove \eqref{fmeas}, we need to estimate the Lebesgue measure of the set of $t$, for which
\[
N(\lda, [0,1]) - N(\lda, I_1) - N(\lda, I_2) \neq 0,
\]
where $\lda = \tau^{-k_2-j}e^t$, $t\in[0, T]$.
That's only true, when
\[
\lda \in \bigcup\limits_{n=0}^\infty 
\left[\mu_{n}, \lda_{n}\right],
\]
and considering $\lda = \tau^{-k_2-j}e^t$ and $t\in[0, T]$, we obtain
\[
(k_2+j)T + t \in \Big( 
\bigcup\limits_{n=0}^\infty 
\left[\ln\mu_{n}, \ln\lda_{n}\right]
\Big) \cap [(k_2+j)T, (k_2+j+1)T].
\]
The measure of the union
\[
\Big| 
\bigcup\limits_{n=2}^\infty 
\left[\ln\mu_{n}, \ln\lda_{n}\right]
\Big| = \sum\limits_{n=2}^{\infty}|\ln\lambda_{n}-\ln\mu_{n}|
\]
is bounded by Theorem \ref{Th5.1}, which means, that the measure of its intersection with segments, that are moving to infinity, tends to zero, which proves the estimate \eqref{fmeas}.

From \eqref{fmod} and \eqref{fmeas} we obtain
\[
\|f_{j+1} - f_j\|_{L_2[0,T]} = o(\tau^{jD}),
\]
whence
\[
\|\sigma - f_j\|_{L_2[0,T]} = o(\tau^{jD}).
\]
Using this estimate and \eqref{fbreaks}, we obtain
\[
\#\mathfrak A_n\cdot\|\sigma-f_n\|_{L_2[0,T]} = o(1),
\]
which allows us to use Proposition \ref{sing} for the function $\sigma$, concluding the proof of the theorem for this case.

\paragraph{General case.}
Let $\{\kappa_i\}_{i=1}^{p}$ be the elements of the set $\{k_i\}_{i=1}^m$ numbered in ascending order without duplication, $\{l_i\}_{i=1}^{p}$ --- their multiplicities (the number of segments $I_n$ corresponding to each value). Similarly to the case of $m=2$ we define functions and constants 
\[
f_j(t) := C\tau^{jD}\sum_{i=0}^{\kappa_p-1} C_i N(\tau^{-i-j}e^t),
\quad t\in[0,T],
\]
\[
C_i = 
\left\{
\begin{aligned}
&\tau^{-(\kappa_p-i)D} \quad  && \kappa_p-i=1,\ldots, \kappa_1, \\
&\tau^{-(\kappa_p-i)D}\cdot\big(1-l_1 \tau^{\kappa_1 D}\big) \quad  && \kappa_p-i=\kappa_1+1,\ldots, \kappa_2, \\
&\tau^{-(\kappa_p-i)D}\cdot\big(1-l_1 \tau^{\kappa_1 D}-l_2 \tau^{\kappa_2 D}\big) \quad  && \kappa_p-i=\kappa_2+1,\ldots, \kappa_3, \\
&\ldots && \\
&\tau^{-(\kappa_p-i)D}\cdot\Big(1-\sum_{j=1}^{p-1} l_j \tau^{\kappa_j D}\Big) \quad  && \kappa_p-i=\kappa_{p-1}+1,\ldots, \kappa_p,
\end{aligned}
\right.
\]
\[
C = \Big(\sum_{i=0}^{\kappa_p-1} C_i \tau^{-iD}\Big)^{-1}.
\]
From \eqref{period} we have
\[
\sum_{i=1}^{p} l_i\tau^{\kappa_i D} = 1,
\]
whence
\[
1 - \sum_{i=1}^{r} l_i\tau^{\kappa_i D} > 0, \quad r=1,\ldots, p-1.
\]
This means, that coefficients $C_i$ are positive, and $C$ is well defined. Also, similarly to the previous case, such a choice of coefficients $C_i$ gives us the relation
\[
f_{j+1}(t) - f_j(t) = C\tau^{jD}\Big(N(\lda) - \sum_{i=1}^p l_i N(\tau^{\kappa_i}\lda)\Big),
\]
where $\lda = \tau^{-\kappa_p-j}e^t$, and the choice of $C$ gives us the relation
\[
\sigma(t) = \lim_{j\to+\infty}f_j(t).
\]
Using Lemma \ref{lemma1}, we obtain
\[
f_{j+1}(t) - f_j(t) = 
C\tau^{jD}\Big(N(\lda, [0,1]) - \sum_{i=1}^m N(\lda, I_i)\Big)
\]
\[
{} = C\tau^{jD}\sum_{i=1}^{m-1}\Big( N(\lda, [a_i, 1]) - N(\lda, I_i) - N(\lda, [a_{i+1}, 1])\Big).
\]
Considering Theorem \ref{4.1}, it is easy to see, that
\[
|f_{j+1}(t) - f_j(t)| \leq C(m-1)\tau^{jD},
\]
and the number of discontinuities of $f_j$ could be estimated as
\[
\#\mathfrak A_j \leq \kappa_p N(\tau^{-\kappa_p-j}) \leq \widetilde{C} \tau^{-jD}.
\]
All that's left to use Proposition \ref{sing} is to prove the estimate
\[
mes\{t\in[0, T]: f_j(t)\neq f_{j+1}(t)\} = o(1), \quad j\to\infty,
\]
which follows directly from Theorem \ref{Th5.1}, same as in the case of $m=2$.
\end{proof}

\section{Small ball deviations of Gaussian processes}
The problem of small ball deviations of Gaussian processes was studied intensively in the recent decades (see, e.g., the reviews \cite{Lif,LiSh,Fatalov}).
Here we recall the statement of the problem.

Consider a Gaussian process $X(t)$, $t \in [0,1]$, with zero mean, and denote by $G_X(t, s) = EX(t)X(s)$, $t, s \in [0,1]$ its covariance function.
Let $\mu$ be a measure on $[0,1]$. Denote
\[
\|X\|_\mu = \Big( \int_{0}^1  X^2(t) \mu(dt)\Big)^{1/2}.
\]
We call an asymptotics of \mbox{$\ln{\bf P}\{\|X\|_\mu \leq \varepsilon\}$} as $\varepsilon\to 0$ a \emph{logarithmic small ball asymptotics}.

It is possible to connect the study of the small ball asymptotics to the study of the eigenvalues of an integral operator
\begin{equation}\label{eq:int_eq}
\uplambda y(t) = \int\limits_0^1 G_X(s,t) y(s) \mu(ds), \quad t\in[0,1].
\end{equation}
If $G_X$ is the Green function for a boundary value problem for the Sturm--Liouville equation, then the eigenvalues of \eqref{eq:int_eq} and the eigenvalues of the Sturm--Liouville boundary problem are the inverse values of each other. Therefore, the asymptotics \eqref{eq:mes_asymp} considering Remark \ref{remark1} gives us the asymptotics
\begin{equation}\label{uplambda_asymp}
\uplambda_n = \dfrac{\varphi(\ln n)}{n^{1/D}} (1 + o(1)), \qquad n\to+\infty,
\end{equation}
where periodic function $\varphi$ satisfies
\begin{equation}\label{eq_varphi_s1}
\varphi(x) = \left(s\left(x/D + \ln(\varphi(x))\right)\right)^{1/D}.
\end{equation}
\begin{remark}
The class of Green Gaussian processes with such covariation functions includes, among others, the Wiener process, the Brownian bridge, the generalized Slepian process (see \cite{Slepian}, \cite{Shepp}) and the Ornstein--Uhlenbeck process.
\end{remark}

\begin{proposition}
\textbf{\textsc{(\cite[Proposition 5.1]{Naz})}}
Let $X(t)$ be a Green Gaussian process corresponding to a second order differential operator. 
Let $\mu$ be an arithmetically self-similar singular measure. Then
\begin{equation}\label{sb_log_asymp}
\ln{\bf P}\{\|X\|_\mu \leq \varepsilon\} \sim -\varepsilon^{-\frac{2D}{1-D}} \zeta(\ln(1/\varepsilon)),
\end{equation}
where $D$ is the power exponent of the asymptotics \eqref{eq:mes_asymp} for measure $\mu$, $\zeta$ is a bounded and separated from zero $\frac{T(1-D)}{2}$-periodic function, $T$ is the period of the function $s$ from the asymptotics \eqref{eq:mes_asymp}.
\end{proposition}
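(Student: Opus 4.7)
The plan is to reduce small ball probabilities to the eigenvalue counting function via a Karhunen--Lo\`{e}ve expansion and a Laplace transform, then transfer the resulting asymptotic through a Tauberian theorem tailored to oscillating coefficients.

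First, by the Karhunen--Lo\`{e}ve theorem in $L_2(\mu)$, the process admits the expansion $X(t)=\sum_n \sqrt{\uplambda_n}\,\xi_n e_n(t)$, where the $\uplambda_n$ are the eigenvalues of \eqref{eq:int_eq}, the $e_n$ the corresponding orthonormal eigenfunctions in $L_2(\mu)$, and the $\xi_n$ are i.i.d.\ standard Gaussians; consequently $\|X\|_\mu^2$ has the same distribution as $\sum_n \uplambda_n \xi_n^2$. For each of the listed processes, $G_X$ is a Green function of a Sturm--Liouville problem of the form \eqref{eq:1.1} with specific boundary conditions, so $\uplambda_n = 1/\lambda_n$. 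Since boundary condition changes and lower order terms produce only relatively compact perturbations, the leading term of the counting function is unaffected and Theorem \ref{main_theo} applies, giving $N(\lambda) = \lambda^D(s(\ln\lambda) + o(1))$ with $s$ continuous, $T$-periodic, positive and separated from zero.

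Second, introduce the Laplace exponent
\[
\mathcal{L}(u) := -\ln \mathbf{E}\exp\Bigl(-\tfrac{u}{2}\|X\|_\mu^2\Bigr) = \tfrac{1}{2}\sum_n \ln(1+u\uplambda_n).
\]
Rewriting this as a Stieltjes integral against $dN(\lambda)$ and performing the scaling $\lambda=ux$, the $T$-periodic oscillation of $s$ propagates without damping to yield
\[
\mathcal{L}(u) = u^D\bigl(s^{\ast}(\ln u) + o(1)\bigr),\qquad u\to\infty,
\]
where $s^{\ast}$ is a continuous $T$-periodic positive function obtained as the convolution of $s$ against a fixed kernel over one period. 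This is the crucial step that carries both the exponent $D$ and the period $T$ from the spectrum into the Laplace domain.

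Third, apply the exponential Tauberian theorem of de Bruijn--Kasahara type in the refined form appropriate to regularly varying behaviour modulated by a bounded periodic factor. The saddle of the Legendre-type transform $u\mapsto\varepsilon$ lies at $u \asymp \varepsilon^{-2/(1-D)}$, which sends the period $T$ in the variable $\ln u$ to the period $T(1-D)/2$ in $\ln(1/\varepsilon)$ and produces \eqref{sb_log_asymp} with $\zeta$ bounded and separated from zero, these properties inherited from $s^{\ast}$. The hard part is precisely this last step: the classical de Bruijn Tauberian theorem only yields the regularly varying exponent and tends to smooth oscillations away, so one needs the refined saddle-point analysis developed in \cite{Naz} to track the periodic factor uniformly in $\varepsilon$ and estimate the remainder sharply enough that the oscillation of $\zeta$ survives rather than being averaged out.
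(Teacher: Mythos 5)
The paper does not prove this proposition—it is quoted verbatim from \cite[Proposition 5.1]{Naz}—and your outline (Karhunen--Lo\`eve reduction to $\sum_n\uplambda_n\xi_n^2$, Laplace exponent $\tfrac12\sum_n\ln(1+u\uplambda_n)$, refined exponential Tauberian/saddle-point step with $u\asymp\varepsilon^{-2/(1-D)}$ rescaling the period $T$ to $T(1-D)/2$) is exactly the route taken in that reference, as is visible from the formulas \eqref{lnP_asymp}--\eqref{eq_theta_phi} with $F$, $F_1$, $\eta$, $\theta$ and $u(r)$ that the present paper imports in its proof of Theorem \ref{th_inconst_zeta}. So the proposal is correct and essentially the same approach as the cited source, with the genuinely hard Tauberian step appropriately deferred to \cite{Naz}, just as the paper itself does.
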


The main focus of this work is on proving the periodic component of spectral asymptotics does not degenerate into constant. Here we prove the same about the periodic component of the small ball asymptotics in \eqref{sb_log_asymp}.

\begin{theorem}\label{th_inconst_zeta}
For a fixed measure $\mu$, the component $\zeta$ of the small ball asymptotics \eqref{sb_log_asymp}  degenerates into constant iff the component $s$ of the spectral asymptotics \eqref{eq:mes_asymp} degenerates into constant. 
\end{theorem}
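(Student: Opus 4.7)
The plan is to trace the derivation of the small-ball asymptotics \eqref{sb_log_asymp} from the spectral asymptotics \eqref{eq:mes_asymp}, exhibit $\zeta$ as the image of $s$ under a convolution whose Fourier multipliers never vanish, and then invoke a Tauberian inversion to transfer constancy backwards. The reverse direction is the trivial one: if $s\equiv s_{0}$ then $N(\lambda)\sim s_{0}\lambda^{D}$, and the passage to small balls via the Karhunen--Lo\`eve expansion and the Laplace method carried out in \cite[Proposition~5.1]{Naz} automatically yields a $\zeta$ without oscillation in $\ln(1/\varepsilon)$.

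For the forward direction the starting point is the Laplace exponent
\[ \psi(u):=\sum_{n}\ln\bigl(1+2u/\lambda_{n}\bigr)=\int_{0}^{\infty}\ln(1+2u/\lambda)\,dN(\lambda). \]
Integrating by parts rewrites this as $\int_{0}^{\infty}2u\,N(\lambda)/\bigl(\lambda(\lambda+2u)\bigr)\,d\lambda$, and substituting $\lambda=2uv$ together with \eqref{eq:mes_asymp} yields $\psi(u)=(2u)^{D}\bigl(\Phi(\ln(2u))+o(1)\bigr)$ as $u\to+\infty$, where
\[ \Phi(t)=\int_{-\infty}^{\infty}\frac{e^{D\tau}}{e^{\tau}+1}\,s(t+\tau)\,d\tau. \]
Thus $\Phi$ is the convolution of the $T$-periodic function $s$ with a fixed kernel $K(\tau)=e^{D\tau}/(e^{\tau}+1)$ and is itself $T$-periodic. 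Expanding $s(t)=\sum_{k\in\mathbb{Z}}c_{k}e^{2\pi ikt/T}$ as a Fourier series and using the classical identity $\int_{-\infty}^{\infty}e^{a\tau}/(e^{\tau}+1)\,d\tau=\pi/\sin(\pi a)$, valid for $0<\mathrm{Re}\,a<1$, I arrive at
\[ \Phi(t)=\sum_{k\in\mathbb{Z}}\frac{\pi\,c_{k}}{\sin\bigl(\pi(D+2\pi ik/T)\bigr)}\,e^{2\pi ikt/T}. \]
Since $|\sin(\pi(D+2\pi ik/T))|^{2}=\sin^{2}(\pi D)+\sinh^{2}(2\pi^{2}k/T)>0$ for every $k\in\mathbb{Z}$, each Fourier multiplier is nonzero, and therefore $\Phi\equiv\mathrm{const}$ if and only if $s\equiv\mathrm{const}$.

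To close the loop I would show that constancy of $\zeta$ forces constancy of $\Phi$. Suppose $\zeta\equiv c$; then \eqref{sb_log_asymp} collapses to the pure power law $\ln{\bf P}\{\|X\|_{\mu}\le\varepsilon\}\sim -c\,\varepsilon^{-2D/(1-D)}$. Applying in reverse the de~Bruijn--Kasahara Tauberian theorem (applicable because $N(\lambda)$ is non-decreasing and $D\in(0,1/2)$) forces the Laplace exponent to satisfy $\psi(u)\sim c'u^{D}$ for some genuine constant $c'$. Comparing with $\psi(u)=(2u)^{D}\Phi(\ln 2u)+o(u^{D})$ gives $\Phi(\ln 2u)\to c'\cdot 2^{-D}$ as $u\to\infty$; but $\Phi$ is $T$-periodic, and the existence of this limit forces $\Phi$ to be identically $c'\cdot 2^{-D}$. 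Combined with the Fourier-multiplier analysis above, this yields $s\equiv\mathrm{const}$.

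The main obstacle is the inverse Tauberian step: at the level of the saddle-point equation $r=\psi'(u)/2$ the function $\zeta$ depends on a linear combination of $\Phi(\ln 2u)$ and $\Phi'(\ln 2u)$, so $\Phi$ cannot be read off from $\zeta$ pointwise. Reducing the hypothesis $\zeta\equiv c$ to the pure-power asymptotics of $\ln{\bf P}$ bypasses this difficulty, since the Tauberian inversion uses only monotonicity of $N$ and regular variation, after which $T$-periodicity upgrades the scalar limit to the equality $\Phi\equiv\mathrm{const}$; combined with the Fourier injectivity of $K$ this closes the equivalence.
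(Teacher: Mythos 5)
Your proposal is correct, but it takes a genuinely different route from the paper's. The paper stays inside the machinery of \cite{Naz}: it writes $\zeta\sim\eta\,\theta^{\frac{D}{1-D}}$ with $\eta,\theta$ given by the explicit integrals \eqref{eq_eta_phi}--\eqref{eq_theta_phi} in terms of the periodic component $\varphi$ of the eigenvalue asymptotics \eqref{uplambda_asymp}, uses the differential identity $(F+F_1)'(x)\cdot x=F_1(x)$ to show that constancy of the product forces constancy of $\theta$, then proves a separate convolution lemma (with a residue computation of $\widehat g_1$) showing that $\theta\equiv\mathrm{const}$ forces $\varphi\equiv\mathrm{const}$, and finally passes from $\varphi$ to $s$ via \eqref{eq_varphi_s}. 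You instead work one level up, with the log-Laplace transform of $\|X\|_\mu^2$: integrating $\ln(1+2u/\lambda)$ against $dN(\lambda)$ exhibits its periodic component $\Phi$ as the convolution of $s$ with the kernel $e^{D\tau}/(e^\tau+1)$, whose Fourier multipliers $\pi/\sin\bigl(\pi(D+2\pi ik/T)\bigr)$ never vanish because $D\in(0,\tfrac12)$, and you invoke the two-sided Kasahara--de~Bruijn exponential Tauberian theorem to convert $\zeta\equiv c$ into a pure power law for $\ln{\bf P}$ and hence for the log-Laplace transform, after which $T$-periodicity upgrades the scalar limit $\Phi(\ln 2u)\to\mathrm{const}$ to $\Phi\equiv\mathrm{const}$. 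Both arguments ultimately rest on the same analytic fact --- the non-vanishing of a Mellin transform of the type $\pi/\sin(\pi(D-i\omega))$ --- but your reduction bypasses the saddle-point pair $(\eta,\theta)$ and the function $\varphi$ entirely, at the price of importing the Tauberian equivalence; the paper's route is longer but self-contained relative to the construction in \cite{Naz} and yields side information such as the smoothness of $\zeta$. If you write this up, the two points to make explicit are the control of the error term in the asymptotics $(2u)^D\bigl(\Phi(\ln 2u)+o(1)\bigr)$ coming from the region $\lambda=O(1)$ (harmless, since $N$ vanishes below the first eigenvalue of the integral operator and the kernel $v^{D-1}/(v+1)$ is integrable at $0$), and the elementary observation that a continuous $T$-periodic function admitting a limit along $t\to+\infty$ is identically constant.
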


\begin{proof}
It follows from the proof of \cite[Theorem 4.2]{Naz}, that
\begin{equation}\label{lnP_asymp}
\ln{\bf P}\{\|X\|^2_\mu \leq r\} \sim - (u(r))^D \cdot \eta (\ln (u(r))), \qquad r\to 0,
\end{equation}
where $u(r)$ is an arbitrary function satisfying
\begin{equation}\label{u_def2}
r \sim u^{D-1} \theta(\ln(u)), \quad u \to \infty.
\end{equation}
Periodic functions $\eta$ and $\theta$ in \eqref{lnP_asymp}--\eqref{u_def2} are defined by formulae
\begin{equation}\label{eq_eta_phi}
\eta(\ln(u)) = \int\limits_0^\infty F\left(\dfrac{\varphi(D\ln u + \ln z)}{z^{1/D}}\right)\, dz,
\end{equation}
\begin{equation}\label{eq_theta_phi}
\theta(\ln(u)) = \int\limits_0^\infty F_1\left(\dfrac{\varphi(D\ln u + \ln z)}{z^{1/D}}\right)\, dz,
\end{equation}
where
\[
F(x) = \dfrac{1}{2} \ln(1+2x) - \dfrac{x}{1+2x}, \quad 
F_1(x) = \dfrac{x}{1+2x}.
\]
and $\varphi$ is a periodic function defined in \eqref{eq_varphi_s1}.

To reiterate, we now have periodic functions $\zeta$, $\eta$, $\theta$, $\varphi$ and $s$ interconnected via various relations, and we aim to show an equivalency between their conditions for degenerating into constant. Let's break it down into simpler steps.

\paragraph{Step 1.} Function $\zeta$ is constant iff function $\eta\, \theta^{\frac{D}{1-D}}$ is constant.

If we substitute $u$ from \eqref{u_def2} into \eqref{lnP_asymp} we obtain
\[
\ln{\bf P}\{\|X\|^2_\mu \leq r\} \sim - r^{-\frac{D}{1-D}} \cdot \eta (\ln (u(r)))\,\theta^{\frac{D}{1-D}} (\ln (u(r))), \qquad r\to 0.
\]
Thus, function $\zeta$ in \eqref{sb_log_asymp} has the asymptotics
\begin{equation}\label{eq_zeta_phi}
\zeta(\ln(1/\varepsilon)) \sim \eta(\ln (u(\varepsilon^2)))\,\theta^{\frac{D}{1-D}} (\ln (u(\varepsilon^2))), \quad \varepsilon \to 0. 
\end{equation} 
Since $u$ is a monotone function taking all values in some vicinity of $\infty$, we conclude that function $\zeta$ is constant iff function $\eta\, \theta^{\frac{D}{1-D}}$ is constant.

\paragraph{Step 2.} Function $\eta\, \theta^{\frac{D}{1-D}}$ is constant iff both $\theta$ and $\eta$ are constant. 

It is clear, that if $\theta$ and $\eta$ are constant, then the product $\eta\, \theta^{\frac{D}{1-D}}$ is constant as well. To prove the other implication, we change the variable $x = u^D z$ in \eqref{eq_eta_phi} and \eqref{eq_theta_phi} to obtain 
\begin{equation}\label{eta_theta_new_def}
\eta(\ln u) u^D = \int\limits_0^\infty F(u \psi(x))\, dx, \qquad \theta(\ln u) u^D = \int\limits_0^\infty F_1(u \psi(x))\, dx,
\end{equation}
where $\psi(x) = \varphi(\ln(x))/x^{1/D}$. Differentiating these formulae and taking into account the relation
\[
(F(x) + F_1(x))'\cdot x = F_1(x),
\]
we arrive at
\[
(\eta(\ln u ) u^D + \theta(\ln u) u^D)' \cdot u = \theta(\ln u) u^D,
\]
thus
\[
 \eta'(x) + D\eta(x) + \theta'(x) - (1-D)\theta(x) = 0.
\]
If $\eta\, \theta^{\frac{D}{1-D}} \equiv c$ here, then minimal and maximal values of $\theta$ on the period satisfy
\[
cD \theta^{-\frac{D}{1-D}} - (1 - D)\theta = 0
\]
therefore they coincide and the claim follows.

\paragraph{Step 3.} If function $\varphi$ is constant then both functions $\eta$ and $\theta$ are constant.

This follows trivially from their definitions \eqref{eq_eta_phi} and \eqref{eq_theta_phi}.

\paragraph{Step 4.} If function $\theta$ is constant then function $\varphi$ is constant.

This statement is proved in Lemma \ref{lemma_theta_phi} below.

\paragraph{Step 5.} Function $\varphi$ is constant iff function $s$ is constant.

Function $\varphi$ is defined by the relation \eqref{eq_varphi_s1}, 
thus $\varphi$ and $s$ are constant at the same time. 

\medskip

To conclude the proof of the theorem we unify the steps above:
\begin{align*}
\zeta=const & \xLeftrightarrow{\text{Step 1}} \eta\, \theta^{\frac{D}{1-D}}=const \xLeftrightarrow{\text{Step 2}} \eta=const \text{ and } \theta=const \\ 
{} & \xLeftrightarrow{\text{Steps 3,4}} \varphi=const \xLeftrightarrow{\text{Step 5}} s=const.
\end{align*}
\end{proof}

\begin{lemma}\label{lemma_theta_phi}
Let $\theta$ be a $T$-periodic function defined by \eqref{eq_theta_phi} with $TD$-periodic function $\varphi$. If $\theta$ is constant, then function $\varphi$ is also constant.
\end{lemma}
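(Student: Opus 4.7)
The plan is to convert the hypothesis $\theta\equiv c$ into a rigid functional equation for $\varphi$, extract the pure-power form of the distribution function of $\psi(x):=\varphi(\ln x)/x^{1/D}$ by Fourier analysis on the circle, and then use a coarea argument to pin down $\varphi$ itself.

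First, I would unfold \eqref{eq_theta_phi} via the change of variable $x=u^D z$ used for \eqref{eta_theta_new_def}, so that the hypothesis becomes $\int_0^\infty F_1(u\psi(x))\,dx=cu^D$ for every $u>0$. Since $F_1$ is strictly monotone, the left-hand side depends on $\psi$ only through the distribution function $\mu_\psi(t):=|\{x:\psi(x)>t\}|$, and the scaling $\psi(e^{TD}x)=e^{-T}\psi(x)$ (inherited from the $TD$-periodicity of $\varphi$) forces $\mu_\psi(t)=t^{-D}\chi(\ln t)$ for some bounded $T$-periodic function $\chi$. The layer-cake identity together with the substitutions $s=2ut$ and $s=e^r$ turn the hypothesis into a convolution identity
\[
\int_{-\infty}^\infty K(r)\,\chi(r-\lambda)\,dr = \mathrm{const}, \qquad K(r):=\frac{e^{(1-D)r}}{(1+e^r)^2},
\]
with $\lambda=\ln(2u)$. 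Periodizing $K$ turns this into a convolution on the circle of circumference $T$; passing to Fourier series yields $\hat K(2\pi n/T)\,\hat\chi_n=0$ for every $n\in\mathbb{Z}$. A direct beta-function calculation gives $\hat K(\omega)=2^{D-1+i\omega}\pi(D+i\omega)/\sin\pi(D+i\omega)$, which for $D\in(0,1/2)$ has neither real zeros nor real poles; hence $\hat\chi_n=0$ for $n\neq 0$ and $\mu_\psi(t)=ct^{-D}$ is a pure power law.

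Second, setting $\Psi(y):=\varphi(y)e^{-y/D}$, the pure-power form of the distribution says that the pushforward $\Psi_*(e^y\,dy)$ equals $cD\,t^{-D-1}\,dt$. In the monotone regime $\varphi'/\varphi<1/D$, the function $\Psi$ is strictly decreasing and the coarea identity rearranges to the separable ODE $(\Psi^{-D})'(y)=e^y/c$, which integrates to $\Psi(y)^{-D}=\Psi(0)^{-D}+(e^y-1)/c$; inserting the gluing relation $\Psi(TD)=e^{-T}\Psi(0)$ forces $\Psi(0)=c^{1/D}$, whence $\Psi(y)=c^{1/D}e^{-y/D}$ and $\varphi\equiv c^{1/D}$.

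The hard part will be the non-monotone case, where $\Psi^{-1}(t)$ on one period consists of several branches and the coarea identity becomes a sum $\sum_{y\in\Psi^{-1}(t)\cap[0,TD)}\varphi(y)^D/|1-D\varphi'(y)/\varphi(y)|=c$ over those branches. I would dispose of this by re-applying the Fourier argument of the first step to the logarithmic density of the single-period pushforward $(\Psi|_{[0,TD]})_*(e^y\,dy)$: its $T$-periodization must equal the constant density $cDe^{-D\rho}$ of $\Psi_*(e^y\,dy)$ in log coordinates, and continuity and positivity of $\varphi$ together with the boundary relation $\Psi(TD)=e^{-T}\Psi(0)$ should force the single-period profile to coincide with that of the constant-$\varphi$ case, ruling out non-constant solutions.
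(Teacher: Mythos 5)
Your first step is sound and is a genuinely different (and in some ways cleaner) route than the paper's: you reduce the hypothesis to a statement about the distribution function $\mu_\psi$ of $\psi(x)=\varphi(\ln x)/x^{1/D}$ via the layer-cake formula, periodize, and kill the nonzero Fourier modes of $\chi$ using the non-vanishing of $\widehat K$. Your beta-function evaluation $\widehat K(\omega)=2^{D-1+i\omega}\pi(D+i\omega)/\sin\pi(D+i\omega)$ agrees (up to normalization) with the paper's residue computation of the transform of $F_1(e^{-x})e^{Dx}$, and indeed has no real zeros for $D\in(0,1)$. The advantage of your formulation is that it does not presuppose that $\psi$ is invertible, whereas the paper must first establish monotonicity of $\psi$ even to write down its convolution kernel $g_2$, which involves $1/\psi'(\psi^{-1}(\cdot))$.

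The gap is exactly where you flag it: passing from $\mu_\psi(t)=ct^{-D}$ back to $\varphi$. A function is not determined by its distribution function, and your proposed resolution of the non-monotone case does not work as stated. In log coordinates the constraint says only that the $T$-periodization of the single-period pushforward density equals a constant; this does not force the single-period profile to be that of the constant-$\varphi$ case (e.g.\ half the constant density spread over an interval of length $2T$ periodizes to the same thing), and even if the single-period \emph{distribution} were pinned down, a non-monotone rearrangement of $h(y)=\ln\varphi(y)-y/D$ within one period would still not be excluded. So ``continuity and positivity \ldots should force'' is precisely the assertion that needs proof, and re-running the Fourier argument does not supply it. The paper closes this hole by a completely different device: it uses the fact that $\varphi$ arises from the eigenvalue asymptotics \eqref{uplambda_asymp}, so that $\psi(n)=\uplambda_n+o(\uplambda_n)$ is asymptotic to a \emph{decreasing} sequence; combined with the scaling $\psi(xe^{kDT})=e^{-kT}\psi(x)$ this forces $\psi$ to be monotone decreasing, after which the distribution function (equivalently, the ODE $\psi'=-\psi^{D+1}/(Dc_1^D)$) determines $\psi$ uniquely and your own ``monotone regime'' computation finishes the proof. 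If you import that monotonicity observation, your argument closes; without it, the non-monotone case remains open.
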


\begin{proof}
Let $\theta \equiv c$ be constant. 
From \eqref{eta_theta_new_def} we obtain
\begin{equation}\label{integral_psi}
c u^D = \int\limits_0^\infty F_1(u \psi(x))\, dx.
\end{equation}
We note the following properties of the function $\psi(x) = \varphi(\ln(x))/x^{1/D}$:
\begin{enumerate}
    \item From the periodicity of $\varphi$ we obtain
    \begin{equation}\label{periodicity_psi}
        \psi(x e^{-DT}) = \psi(x) e^{T}.
    \end{equation}
    \item We claim, that $\psi$ is a monotone decreasing function. Indeed, if it is not then there exist $x_1 > x_2$, such that $\psi(x_1) > \psi(x_2)$. However, for any $k\in\mathbb{N}$ formulae \eqref{uplambda_asymp} and \eqref{periodicity_psi} imply 
    \[
    e^{-kT} (\psi(x_1) - \psi(x_2)) = \psi(x_1 e^{kDT}) - \psi(x_2 e^{kDT}) = \uplambda_{n_1} - \uplambda_{n_2} + o(e^{-kT}),
    \]
    where $n_{1,2} = \lfloor x_{1,2}e^{kDT} \rfloor$ are the integer parts. Thus, for sufficiently large $k$ we will be able to find two eigenvalues $\uplambda_{n_1} > \uplambda_{n_2}$, $n_1 > n_2$, which is impossible.
\end{enumerate}
On the other hand, direct computation gives
\begin{equation}\label{integral_phi}
\int\limits_0^\infty F_1(u \phi(x))\, dx = c u^D,
\end{equation}
where
\[
\phi(x) = c_1/x^{1/D}, \qquad c_1 = \left[ \dfrac{1}{c}  \int\limits_0^\infty F_1(x^{-1/D})\, dx\right]^{-1/D}.
\] 
We change variables in integral identities \eqref{integral_psi} and \eqref{integral_phi} to $w = - \ln (u \psi(x))$ and $w = - \ln( u \phi(x))$ respectively, subtract the resulting relations and arrive at
\begin{equation}\label{conv1}
\forall v \in \mathbb{R} \qquad \int\limits_{-\infty}^{+\infty} F_1(e^{-w}) \left[ \dfrac{1}{\psi'(\psi^{-1}(e^{v-w}))} - \dfrac{1}{\phi'(\phi^{-1}(e^{v-w}))} \right] \, e^{v-w} dw = 0,
\end{equation}
where  $v = -\ln u$.

The integral in \eqref{conv1} is in essence a convolution. Namely, we multiply  \eqref{conv1} by $e^{Dv}$ 
and rewrite it as
\begin{equation}\label{conv2}
\forall v \in \mathbb{R} \qquad (g_1 * g_2) (v) = 0,
\end{equation}
where
\[
g_1(v) = F_1(e^{-v}) e^{Dv}, \qquad g_2(v) = \left[ \dfrac{1}{\psi'(\psi^{-1}(e^{v}))} - \dfrac{1}{\phi'(\phi^{-1}(e^{v}))} \right] \, e^{(1+D)v}.
\]
 We aim to demonstrate that $g_2 \equiv 0$ using the Fourier transform argument.

\paragraph{Step 1.} We show that $g_1$ and $g_2$ have valid Fourier transforms. By the definition of $F_1$ it is easy to see that
\[
    g_1(v) \sim e^{(D-1)v}, \quad v\to +\infty, \qquad g_1(v) \sim \frac{1}{2} e^{Dv}, \quad v\to -\infty,
\]
thus $g_1 \in L_1(\mathbb{R})$, and its Fourier transform is a continuous function on $\mathbb{R}$. 

Further, we show that $g_2$ is a $T$-periodic function. Note, that by the definition of $\phi$ we have
\begin{equation}\label{phi_deriv_of_phi_inverse}
\phi'(\phi^{-1}(1/u)) = -\dfrac{u^{-D-1}}{Dc_1^D}.   
\end{equation} 
Therefore the second term of $g_2$ is a constant:
\[
 - \dfrac{e^{(1+D)v}}{\phi'(\phi^{-1}(e^{v}))} = D c_1^D.
\]
For the first term of $g_2$ we obtain
\begin{align*}
\forall a \in \mathbb{R} \quad \int\limits_a^{a+T} \dfrac{e^{(1+D)v}}{\psi'(\psi^{-1}(e^{v}))} dv &\stackrel{(a)}{=} \int\limits_{\psi^{-1}(e^a)}^{\psi^{-1}(e^a e^{T})} (\psi(z))^D dz \stackrel{(b)}{=} \int\limits_{\psi^{-1}(e^a)}^{\psi^{-1}(e^a) e^{-TD}} \dfrac{(\varphi(\ln z))^D}{z} dz \\
{} &\stackrel{(c)}{=} \int\limits_{\ln \psi^{-1}(e^a)}^{ \ln \psi^{-1}(e^a) - TD} \varphi(\xi) d\xi = const.
\end{align*}
The equality $(a)$ is obtained by the change of variable $z = \psi^{-1}(e^v)$. The equality $(b)$ uses the property \eqref{periodicity_psi} for the upper limit and the definition of $\psi$ for the integrand. Finally, in $(c)$ we change variable $\xi = \ln z$ and obtain an integral of a periodic function $\varphi$ over its period $TD$. Thus $g_2$ is a $T$-periodic functions and its Fourier transform (in the sense of distributions) is a linear combination of delta-functions at points $\frac{kT}{2\pi}$ for $k\in\mathbb{Z}$.

\paragraph{Step 2.} We show that the Fourier transform of $g_1$ is never zero:
\[
\forall \omega\in \mathbb{R} \qquad \widehat{g}_1 (\omega) = \int\limits_{-\infty}^{+\infty} F_1(e^{-v})e^{Dv -iv\omega} dv \neq 0.
\]
For $\omega > 0$ we calculate this integral using the residue theorem for a semicircular contour in the lower imaginary half-plane. Singularities of function $F_1(e^{-v})$ are simple poles at points $\xi_k = \ln 2 + i \pi (2k-1)$ for all $k\in\mathbb{Z}$. We calculate the residues using the L'H\^{o}pital rule
\[
\mathrm{Res} \left(\dfrac{f}{h}, a \right) = 
\dfrac{f(a)}{h'(a)}, 
\]
if $f(a) \neq 0$, $h(a) = 0$, $h'(a) \neq 0$. In our case 
\[
\mathrm{Res} (F_1(e^{-v})e^{v(D - i\omega)}, \xi_k) = -\dfrac{1}{2} e^{\xi_k (D-i\omega)} = {}
\]
\[
{} = -\dfrac{1}{2} e^{(D - i\omega)(\ln 2 - i\pi)} (e^{2\pi i(D - i\omega)})^k.
\]
Summing all residues in the lower half-plane we obtain 
\[
\int\limits_{-\infty}^{+\infty} F_1(e^{-v})e^{v(D - 1 -i\omega)} dv = 2\pi i \sum_{k=0}^{\infty} \dfrac{1}{2}e^{(D - i\omega)(\ln 2 - i\pi)} (e^{2\pi i (D - i\omega)})^{-k}
\]
\[
{} = \dfrac{\pi i e^{(D - i\omega)(\ln 2 - i\pi)}}{1 - e^{-2\pi i(D - i\omega)}} \neq 0.
\]
For $\omega < 0$
\[
\widehat{g}_1 (\omega) = \overline{\widehat{g}_1 (-\omega)} = \dfrac{-\pi i e^{(D - i\omega)(\ln 2 + i\pi)}}{1 - e^{2\pi i(D - i\omega)}} \neq 0.
\]
Finally,
\[
\widehat{g}_1 (0) = \dfrac{-\pi i e^{D(\ln 2 + i\pi)}}{1 - e^{2\pi i D}} = \dfrac{\pi 2^{D-1}}{ \sin (\pi D)} \neq 0.
\]

\paragraph{Step 3.} Now we are ready to apply the Fourier transform to the relation \eqref{conv2}:
\[
\forall \omega \in \mathbb{R} \qquad \widehat{g}_1(\omega) \cdot \widehat{g}_2(\omega) = 0.
\]
And since $\widehat{g}_1(\omega)\neq 0$ we conclude that $\widehat{g}_2 \equiv 0$ and thus 
\[
\forall u > 0 \qquad u^{-1-D} g_2(\ln u) = \dfrac{1}{\psi'(\psi^{-1}(u))} - \dfrac{1}{\phi'(\phi^{-1}(u))} = 0.
\]
By \eqref{phi_deriv_of_phi_inverse} it immediately follows that
\[
\forall x > 0 \qquad \psi'(x) = -\dfrac{\psi^{D+1}(x)}{D c_1^D} .
\]
This differential equation has a family of solutions
\[
\psi(x) = \dfrac{c_1}{(x+c_2)^{1/D}},
\]
and since $\psi(x) x^{1/D} = \varphi(\ln(x))$ is a periodic function of logarithm, we obtain $\varphi \equiv c_1$, and the lemma is proved.
\end{proof}

\begin{remark}
It is evident from \eqref{eta_theta_new_def} that $\eta$ and $\theta$ are smooth functions. Thus, function $\zeta$ in \eqref{sb_log_asymp} is smooth even though function $s$ in \eqref{eq:mes_asymp} is not.
\end{remark}

\bigskip

Author thanks A.~I.~Nazarov for the statement of the problem and constant encouragement, D.~D.~Cherkashin for valuable remarks, and A.~A.~Vladimirov for the remark, that allowed to simplify the proof of Theorem \ref{4.1} greatly.

The work is supported by Ministry of Science and Higher Education of the Russian Federation (agreement no. 075-15-2022-287).
\bigskip
\bigskip

\begin{enbibliography}{99}
\addcontentsline{toc}{section}{References}

\bibitem{VSh3} 
Vladimirov, A.~A. and Sheipak, I.~A., 2013. On the Neumann problem for the Sturm--Liouville equation with Cantor-type self-similar weight. Funktsional'nyi Analiz i Ego Prilozheniya, 47(4), pp.~18-29 (in Russian). English translation in Functional Analysis and Its Applications, 47(4), 2013, pp.~261-270.

\bibitem{Rast1} 
Rastegaev, N.~V., 2014. On spectral asymptotics of the Neumann problem for the Sturm--Liouville equation with self-similar weight of generalized Cantor type. Zapiski Nauchnykh Seminarov POMI, 425, pp.~86-98 (in Russian). English translation in Journal of Mathematical Sciences, 210, 2015, pp.~814-821.

\bibitem{BS2} Birman, M.~Sh. and Solomyak, M.~Z., 2010. Spectral theory of self-adjoint operators in Hilbert space. 
Ed.2, Lan' publishers (in Russian). English translation of the 1st ed: Mathematics and its Applications (Soviet Series). D. Reidel Publishing Co., Dordrecht, 1987.

\bibitem{Mats} 
Markus, A.~S. and Matsaev, V.~I., 1982. Comparison theorems for spectra of linear operators and spectral asymptotics. Trudy Moskovskogo Matematicheskogo Obshchestva, 45, pp.~133-181 (in Russian).

\bibitem{NazSheip} 
Nazarov, A.~I. and Sheipak, I.~A., 2012. Degenerate self-similar measures, spectral asymptotics and small deviations of Gaussian processes. Bulletin of the London Mathematical Society, 44(1), pp.~12-24.

\bibitem{Naz}
Nazarov, A.~I., 2004. Logarithmic $L_2$-small ball asymptotics with respect to self-similar measure for some Gaussian processes. Zapiski Nauchnykh Seminarov POMI, 311, pp.~190-213 (in Russian). English translation in Journal of Mathematical Sciences (New York), 133(3), 2006, pp.~1314-1327.

\bibitem{K} Krein, M.~G., 1951. Determination of the density of the symmetric inhomogeneous string by spectrum.
Doklady Akademii Nauk SSSR,  76(3), pp.~345-348 (in Russian).

\bibitem{BS1} 
Birman, M.~S. and Solomyak, M.~Z., 1970. Asymptotic behavior of the spectrum of weakly polar integral operators. Izvestiya Rossiiskoi Akademii Nauk. Seriya Matematicheskaya, 34(5), pp.~1142-1158 (in Russian). English translation in Mathematics of the USSR-Izvestiya, 4(5), 1970, pp.~1151-1168.

\bibitem{KrKac} 
Kac, I.~S. and Krein, M.~G., 1958. A discreteness criterion for the spectrum of a singular string. Izvestiya Vuzov. Matematika, 3(2), pp.~136-153 (in Russian).

\bibitem{McKeanRay} McKean, H.~P. and Ray, D.~B., 1962. Spectral distribution of a differential operator. Duke Mathematical Journal, 29(2), pp.~281-292.

\bibitem{B} 
Borzov, V.~V., 1970.  Quantitative characteristics of singular measures. Spectral Theory and Wave Processes, volume 4 of Problems of Mathematical Physics, Leningrad University Publisher, pp.~42-47 (in Russian). English translation in Spectral Theory and Wave Processes. Topics in Mathematical Physics, vol 4, Springer, Boston, 1971, pp.~37-42.

\bibitem{F} 
Fujita, T., 1987. A fractional dimension, self-similarity and a generalized diffusion operator. Probabilistic Method on Mathematical Physics. In Proc. of Taniguchi International Symp., pp.~83-90. Kinokuniya.

\bibitem{HU} 
Uno, T. and Hong, I., 1959. Some consideration of asymptotic distribution of eigenvalues for the equation $d^2u/dx^2 + \lda\rho(x)u = 0$. In Japanese journal of mathematics: transactions and abstracts, 29, pp.~152-164. The Mathematical Society of Japan.

\bibitem{RFr}
Freiberg, U.~R. and Rastegaev, N.~V., 2018. On spectral asymptotics of the Sturm--Liouville problem with self-conformal singular weight with strong bounded distortion property. Boundary-value problems of mathematical physics and related problems of function theory. Part 47, Zapiski Nauchnykh Seminarov POMI, 477, pp.~129-135 (in Russian). English translation in Journal of Mathematical Sciences, 244, 2020, pp.~1010-1014.

\bibitem{SV} 
Solomyak, M. and Verbitsky, E., 1995. On a spectral problem related to self-similar measures. Bulletin of the London Mathematical Society, 27(3), pp.~242-248.

\bibitem{KL} 
Kigami, J. and Lapidus, M.~L., 1993. Weyl's problem for the spectral distribution of Laplacians on pcf self-similar fractals. Communications in Mathematical Physics, 158, pp.~93-125.

\bibitem{VSh1} 
Vladimirov, A.~A. and Sheipak, I.~A., 2006. Self-similar functions in \(L_2[0,1]\) and the Sturm--Liouville problem with singular indefinite weight. Matematicheskii Sbornik, 197(11), pp.~13-30 (in Russian). English translation in Sbornik: Mathematics, 197(11), 2006, pp.~1569-1586.

\bibitem{V3} 
Vladimirov, A.~A., 2015. Method of oscillation and spectral problem for four-order differential operator with self-similar weight. Algebra i Analiz, 27(2), pp.~83-95 (in Russian). English translation in St. Petersburg Mathematical Journal, 27(2), 2016, pp.~237-244.

\bibitem{Sh} 
Sheipak, I.~A., 2007. On the construction and some properties of self-similar functions in the spaces \(L_p[0,1]\). Matematicheskie Zametki, 81(6), pp.~924-938 (in Russian). English translation in Mathematical Notes, 81, 2007, pp.~827-839.

\bibitem{H} 
Hutchinson, J.~E., 1981. Fractals and self similarity. Indiana University Mathematics Journal, 30(5), pp.~713-747.

\bibitem{V2} 
Vladimirov, A.~A., 2009. On the oscillation theory of the Sturm--Liouville problem with singular coefficients. Zhurnal Vychislitel'noi Matematiki i Matematicheskoi Fiziki, 49(9), pp.~1609-1621 (in Rusian). English translation in Computational Mathematics and Mathematical Physics, 49, 2009, pp.~1535-1546.

\bibitem{Naz2} 
Nazarov, A.~I., 2009. On a set of transformations of Gaussian random functions. Teoriya Veroyatnostei i ee Primeneniya, 54(2), pp.~209-225 (in Russian). English translation in Theory of Probability and Its Applications, 54(2), 2010, pp.~203-216.

\bibitem{Lif} 
Lifshits, M.~A., 1999. Asymptotic behavior of small ball probabilities. Probability Theory and Mathematical Statistics: Proceedings of the Seventh International Vilnius Conference, pp.~453-468.

\bibitem{LiSh} 
Li, W.~V. and Shao, Q.~M., 2001. Gaussian processes: inequalities, small ball probabilities and applications. Stochastic Processes: Theory and Methods, Handbook of Statistics, 19, pp.~533-597.

\bibitem{Fatalov}
Fatalov, V.~R., 2003. Constants in the asymptotics of small deviation probabilities for Gaussian processes and fields. Uspekhi Matematicheskikh Nauk, 58(4)(352), pp.~89-134 (in Russian). English translation in Russian Mathematical Surveys, 58(4), 2013, pp.~725-772.

\bibitem{Slepian} 
Slepian, D., 1961. First passage time for a particular Gaussian process. The Annals of Mathematical Statistics, 32(2), pp.~610-612.

\bibitem{Shepp}  
Shepp, L.~A., 1971. First passage time for a particular Gaussian process. The Annals of Mathematical Statistics, 42(3), pp.~946-951.

\end{enbibliography}

\end{document}